\documentclass[a4paper]{article}

\usepackage{graf_GR}

\newcommand{\alt}{{\widetilde{\alpha}}}
\renewcommand{\Re}{{\mathrm{Re}}}
\renewcommand{\Im}{{\mathrm{Im}}}
\newcommand{\DM}{{D}}
\newcommand{\varept}{{\widetilde{\varep}}}
\renewcommand{\wp}{Q}
\renewcommand{\omo}{\tilde{\omega}}

\renewcommand{\lat}{{\mathchar'26\mkern-10mu \lambda}}

\title{Stationary gravitational modes on Kerr-anti-de Sitter spacetimes}
\author{Olivier Graf\footnote{Univ. Grenoble Alpes, CNRS, IF, 38000 Grenoble, France. E-mail: \texttt{olivier.graf@univ-grenoble-alpes.fr}.}}
\begin{document}
\maketitle

\begin{abstract}
  We prove that, for any continuous path of Kerr-adS black hole parameters crossing the Hawking-Reall threshold, for all azimuthal number $|m|$ sufficiently large, there exists black hole parameters on the path such that the Teukolsky equations with conformal boundary conditions admit a non-trivial regular stationary (\emph{i.e.} with frequency $\omega=m\omega_+$) mode solution. When $|m|$ goes to infinity, we show that these black hole parameters accumulate at the Hawking-Reall threshold. The major difficulty is that one cannot construct solutions to the system of Teukolsky equations using variational arguments as it is usually done for the classical wave equation. We introduce a new scheme of proof based on a shooting-type continuity argument and a high-frequency WKB approximation.
\end{abstract}

\section{Introduction}\label{sec:intro}
The \emph{Kerr-anti-de Sitter spacetimes} (Kerr-adS) are the solutions to the Einstein equations
\begin{align}\label{eq:EEcosmo}
  \RRRic(g) -\half \mathrm{R}(g)g + \Lambda g = 0,
\end{align}
with negative cosmological constant $\La=:-3k^2<0$, given by
\begin{align}\label{eq:gKadS}
  \begin{aligned}
    g_{\mathrm{KadS}} & := -\frac{\Delta}{\Xi^2\Si}\le(\d t - a \sin^2\varth\d\varphi\ri)^2 + \frac{\Si}{\Delta}\d r^2 + \frac{\Si}{\Delta_\varth}\d\varth^2 + \frac{\Delta_\varth}{\Xi^2\Si}\sin^2\varth\le(a\d t - (r^2+a^2)\d\varphi\ri)^2,
  \end{aligned}
\end{align}
where
\begin{align*}
  \Si & := r^2+a^2\cos^2\varth, & \Delta & := (r^2+a^2)\le(1+k^2r^2\ri) - 2Mr, \\
  \Delta_\varth & := 1 - a^2k^2\cos^2\varth, &  \Xi & := 1-a^2k^2.
\end{align*}
We consider \emph{admissible (subextremal) Kerr-adS parameters} $(M,a,k)$, \emph{i.e.} such that
\begin{align*}
  M,k & > 0, & a & \geq 0, & ak & < 1,
\end{align*}
and such that the polynomial $\De(r)$ has two distinct non-negative roots. For such parameters, the expression~\eqref{eq:gKadS} defines a smooth metric on the manifold $\MM_{\mathrm{KadS}} :=  \RRR_t\times(r_+,+\infty)_r\times\SSS^2_{\varth,\varphi}$, where $r=r_+$ is the largest root of $\De(r)$ (which corresponds to the black hole radius).\\ 

The linearisation of the Einstein equations~\eqref{eq:EEcosmo} around a Kerr-adS solution is governed by two wave-type \emph{Teukolsky equations}\footnote{We re-derived~\eqref{eq:Teuk} from~\cite[(3.18)]{Kha83} for the purpose of this introduction. In the bulk of the paper we will use directly the separated versions~\cite[(3.19)--(3.22)]{Kha83}.} \cite{Teu72,Kha83}
\begin{align}\label{eq:Teuk}
  \begin{aligned}
      0 & = \Box_{g_{\mathrm{KadS}}} \al^{[\pm2]} \pm 2\frac{\pr_r\De}{\Si}\pr_r\al^{[\pm2]} \pm 2\Xi\frac{\pr_r\De}{\Si\De}\le(a\pr_\varphi+(r^2+a^2)\pr_t\ri)\al^{[\pm2]} \\
        & \quad \mp 8\frac{\Xi r}{\Si}\pr_t\al^{[\pm2]} + \frac{\le(6k^2r^2 + (1\pm1) \pr^2_r\De\ri)}{\Si}\al^{[\pm2]} \\
        &  \quad \pm 4i\frac{\Xi\cot\varth}{\Si\De_\varth}(1-a^2k^2\cos(2\varth))\le(a\sin\varth\pr_t+\frac{\pr_\varphi}{\sin\varth}\ri)\al^{[\pm2]} \\
        & \quad \mp 8i\frac{\Xi}{\Si} a \cos\varth\pr_t\al^{[\pm2]} -\frac1\Si\le(2\De_\varth+2a^2k^2+4\frac{\Xi^2\cot^2\varth}{\De_\varth}\ri)\al^{[\pm2]},
    \end{aligned}
\end{align}
where
\begin{align*}
  \Box_{g_{\mathrm{KadS}}} \al & = \frac{1}{\Si}\pr_r\le(\De\pr_r\al\ri) - \frac{\Xi^2}{\Si\De}\le(a\pr_\varphi+(r^2+a^2)\pr_t\ri)^2\al \\
                               & \quad + \frac{1}{\Si\sin\varth}\pr_\varth\le(\De_\varth\sin\varth\pr_\varth\al\ri) + \frac{\Xi^2}{\Si\De_\varth} \le(a\sin\varth\pr_t+\frac{\pr_\varphi}{\sin\varth}\ri)^2\al, 
\end{align*}
and where $\al^{[+2]},\al^{[-2]}$ are spin-$\pm2$-weighted complex-valued functions. See~\cite[Section 2.2]{Daf.Hol.Rod19a} or~\cite{Mil24} for a review on spin-weighted functions (which, at first reading, can be thought of as usual complex-valued functions). In this paper, we will renormalise $\al^{[\pm2]}$ as follows
\begin{align}\label{eq:renormalt}
  \alt^{[+2]} & := \De(r^2+a^2)^{1/2} \al^{[+2]}, & \alt^{[-2]} & := \De^{-1}(r^2+a^2)^{1/2}\al^{[-2]}.
\end{align}

Defining the tortoise coordinate $r^\star$ by
\begin{align*}
  \frac{\d r^\star}{\d r} = \frac{r^2+a^2}{\De}, && \text{and} && r^\star(r) \xrightarrow{r\to+\infty} 0,
\end{align*}
we say that $\alt^{[\pm2]}$ are \emph{regular up to infinity} ($r^\star=0$), if there exists an extension of $\alt^{[\pm2]}$ at $r^\star=0$ such that
\begin{align}
  \label{eq:defreg}
  \alt^{[\pm2]} \in \mathcal{C}^\infty \left(\mathbb{R}_t\times (-\infty,0]_{r^\star},\mathcal{S}^{[\pm2]}\right),
\end{align}
where $\mathcal{S}^{[\pm2]}$ is the space of smooth spin-$\pm2$-weighted complex functions on $\mathbb{S}^2$. 
Moreover, we say that $\alt^{[\pm2]}$ are \emph{regular at the future event horizon}, if for all $p,q\in\mathbb{N}$ and for all $t^\star_0\in\mathbb{R}$,
\begin{align}
  \label{eq:defreghor}
  L^p(\De^{-1}\Lb)^q\le(\De^{\pm1}\alt^{[\pm2]}\ri)\bigg|_{t^\star=t^\star_0} & = O_{r\to r_+}(1),
\end{align}
where $t^\star$ is the regular time function $t^\star := t + r^\star - \frac1k\arctan(kr)$, and where 
\begin{align*}
  L & := \Xi\pr_t + \frac{a\Xi}{r^2+a^2}\pr_\varphi + \pr_{r^\star}, & \Lb & := \Xi\pr_t+\frac{a\Xi}{r^2+a^2}\pr_\varphi -\pr_{r^\star}.  
\end{align*}
It is well-known (see~\cite{War15,Gra.Hol23}) that the Teukolsky equations~\eqref{eq:Teuk} admit a well-posed initial boundary value problem, provided that boundary conditions at infinity $(r^\star=0)$ are imposed. The natural boundary conditions, corresponding to prescribing the induced conformal metric at infinity to be the same as for anti-de Sitter space (see~\cite{Fri95,Gra.Hol23,Gra.Hol24}), read
\begin{align}
  \label{eq:TeukBC}
  \left(\alt^{[+2]} - \overline{\big(\alt^{[-2]}\big)}\right)\Big|_{r^\star=0} & = 0, & \left(\pr_{r^\star}\alt^{[+2]} + \pr_{r^\star}\overline{\big(\alt^{[-2]}\big)}\right)\Big|_{r^\star=0} & = 0.
\end{align}

A remarkable feature of Kerr-adS spacetimes is that the \emph{Hawking-Reall} Killing vector field
\begin{align*}
  \mathbf{K} := \pr_t + \om_+ \pr_\varphi, && \text{with} && \om_+ := \frac{a}{r_+^2+a^2},
\end{align*}
is globally causal on $\MM_{\mathrm{KadS}}$ if the \emph{Hawking-Reall bound}~\cite{Haw.Rea99} holds
\begin{align}\label{eq:HR}
  a & < k r_+^2.
\end{align}
This is in strong contrast with the asymptotically flat Kerr case, where, as soon as $a>0$, $\mathbf{K}$ becomes spacelike for $r$ sufficiently large.\\

In this article, we show that the Hawking-Reall bound~\eqref{eq:HR} is critical for the stability of Kerr-anti-de Sitter spacetimes as solutions to the Einstein equations~\eqref{eq:EEcosmo} (see discussions and a review of previous results at the end of this section). The following theorem is the main result of the paper.

\begin{theorem}\label{thm:main}
  Let $\Theta:s\in[0,1]\mapsto\left(M(s),a(s),k(s)\right)$ be a continuous path of admissible Kerr-adS parameters, such that $\left(M(0),a(0),k(0)\right)$ satisfy the Hawking-Reall bound~\eqref{eq:HR} and $\left(M(1),a(1),k(1)\right)$ strictly violate~\eqref{eq:HR}, \emph{i.e.}
  \begin{align*}
    a(0) < k(0)(r_+(0))^2 && \text{and} && a(1) > k(1)(r_+(1))^2.
  \end{align*}
  There exists $m_0\in\mathbb{N}$ depending on $\Theta$ such that for all $|m|\geq m_0$, there exists $s_m\in[0,1]$ and non-trivial stationary modes
  \begin{align}\label{eq:defstatmodes}
    \begin{aligned}
      \alt^{[+2]}(t,r,\varth,\varphi) & = e^{-im\om_+t} R^{[+2]}(r) S^{[+2]}(\varth) e^{+im\varphi}, \\
      \alt^{[-2]}(t,r,\varth,\varphi) & = e^{+im\om_+t} R^{[-2]}(r) S^{[-2]}(\varth) e^{-im\varphi},
    \end{aligned}
  \end{align}
  which are regular spin-$\pm2$-weighted complex-valued functions~\eqref{eq:defreg}, solutions to the Teukolsky equations~\eqref{eq:Teuk} with $(M,a,k)=\left(M(s_m),a(s_m),k(s_m)\right)$, satisfying the regularity conditions~\eqref{eq:defreghor} at the horizon and the boundary conditions~\eqref{eq:TeukBC} at infinity. Moreover, we have
  \begin{align}\label{eq:limHRminfinity}
    a(s_m) - k(s_m)(r_+(s_m))^2 & \xrightarrow{|m|\to+\infty} 0,
  \end{align}
  \emph{i.e.} the black hole parameters $\left(M(s_m),a(s_m),k(s_m)\right)$ accumulate at the Hawking-Reall threshold when $|m|\to+\infty$.
\end{theorem}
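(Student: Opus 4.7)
The plan is to separate variables via the ansatz \eqref{eq:defstatmodes}, turning \eqref{eq:Teuk} at the critical frequency $\om=m\om_+$ into a spin-weighted spheroidal eigenvalue problem for $S^{[\pm2]}$ with eigenvalues $\lambda^{[\pm2]}$, together with two second-order radial ODEs in $r^\star$ for $R^{[\pm2]}$ depending on $(M,a,k,m,\lambda^{[\pm2]})$ and coupled only through the boundary conditions \eqref{eq:TeukBC}. Since $\om=m\om_+$ makes the horizon generator $\mathbf{K}$ annihilate the ansatz, a Frobenius analysis at $r=r_+$ shows that the horizon-regular solution space of each radial ODE is one-complex-dimensional (the condition \eqref{eq:defreghor} selecting one of the two indicial roots for each spin). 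Fixing a normalised horizon-regular basis $R^{[\pm2]}_{s,m}$ and propagating it to $r^\star=0$, I encode the failure of \eqref{eq:TeukBC} as a $2\times 2$ complex \emph{shooting matrix} $\mathcal{M}(s,m)$ whose determinant $D(s,m)$ vanishes precisely when a non-trivial stationary mode exists. The theorem is then reduced to producing, for each $|m|\geq m_0$, a zero $s_m\in[0,1]$ of $s\mapsto D(s,m)$ accumulating at the Hawking-Reall threshold value $s_{\mathrm{HR}}$.

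The existence of $s_m$ will come from a continuity argument in $s$, fed by a high-frequency WKB analysis of the radial ODEs with semi-classical parameter $h=1/|m|$. After rescaling and using the spheroidal asymptotics $\lambda^{[\pm2]}=m^2\lat+O(|m|)$, the radial equations take the semi-classical form $h^2\pr_{r^\star}^2u=V_s(r^\star;h)u$, whose leading potential $V_s^{(0)}$ depends only on $(M(s),a(s),k(s))$ and $\lat$. The key qualitative input I would establish is that $V_s^{(0)}$ is strictly positive on $(r_+,\infty)$ if and only if the Hawking-Reall bound \eqref{eq:HR} holds, develops a double zero exactly at $s=s_{\mathrm{HR}}$, and becomes negative on a macroscopic interval bounded by two simple turning points when $s>s_{\mathrm{HR}}$. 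Liouville-Green theory then provides exponentially-decaying horizon-regular solutions (so $D(s,m)$ is non-vanishing with controlled phase) for $s<s_{\mathrm{HR}}$, whereas for $s>s_{\mathrm{HR}}$ the horizon-regular solutions are oscillatory at $r^\star=0$ with phase given by a Bohr-Sommerfeld integral. Matching the WKB data to \eqref{eq:TeukBC} produces a quantisation condition of the form
\[
\int_{r^\star_1(s)}^{r^\star_2(s)}\sqrt{-V_s^{(0)}(r^\star)}\,\d r^\star \;=\; \pi h\bigl(n+\tfrac12\bigr)+O(h^2),
\]
whose smallest solution $s_m$ belongs to a shrinking neighbourhood of $s_{\mathrm{HR}}$, giving both the existence of the mode and the accumulation \eqref{eq:limHRminfinity}.

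The main obstacle I anticipate is the uniform WKB matching through the turning points precisely in a neighbourhood of $s=s_{\mathrm{HR}}$, where the two simple turning points of $V_s^{(0)}$ coalesce. Classical Airy-type connection formulas degenerate in this regime, and uniform asymptotics for coalescing turning points (Langer-Olver or parabolic-cylinder model functions) are required; this is where the abstract's reference to a \emph{new scheme} seems essential, since variational arguments are unavailable for the coupled, non-self-adjoint Teukolsky system. A secondary complication is that \eqref{eq:TeukBC} couples the two spins at $r^\star=0$, so the WKB matching and the resulting quantisation must be carried out jointly for $R^{[+2]}$ and $R^{[-2]}$ with a combined phase. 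Finally, all estimates have to be uniform in $s$ along the compact path $\Theta$, which demands uniform control of the spheroidal eigenvalues $\lambda^{[\pm2]}(m,s)$ and of the turning-point geometry; once these uniformities are secured, the shooting continuity argument closes and yields Theorem \ref{thm:main}.
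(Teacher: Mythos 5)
There is a genuine gap at the very centre of your scheme: you encode the boundary conditions \eqref{eq:TeukBC} in a $2\times2$ \emph{complex} shooting matrix and propose to find $s_m$ as a zero of the complex scalar $D(s,m)$ along the one-real-parameter path $s\in[0,1]$. A continuous complex-valued function of one real variable has no reason to vanish anywhere (vanishing of $\Re D$ and $\Im D$ simultaneously is a codimension-two condition), and no intermediate-value or degree-theoretic argument is available in this setting; your Bohr--Sommerfeld quantisation heuristic implicitly treats the problem as self-adjoint with a real spectral parameter, but the radial Teukolsky potential is genuinely complex ($V=V_0+V_{00}-iV_1$ with $V_1\not\equiv0$ for $a\neq0$), which is precisely the obstruction the paper identifies to running the argument of \cite{Dol17}. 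The paper's resolution is structural rather than asymptotic: a Frobenius analysis at $r=r_+$ (with indicial roots $\pm1$ and a non-vanishing Teukolsky--Starobinsky constant) forces any horizon-regular pair $(R^{[+2]},R^{[-2]})$ to be \emph{colinear}, so the coupled conditions \eqref{eq:TeukBCradial} collapse to the single manifestly real target condition $\wp[R](0)=0$ with $\wp[R]=(|R|^2)'$ (Lemmas~\ref{lem:target} and~\ref{lem:reciproque}). The continuity argument is then run on the real ``bullet'' $\wp[R]$, supplemented by a no-rebound property (Proposition~\ref{prop:concavityargument}) guaranteeing that at the first parameter where positivity of $\wp[R]$ fails, the zero sits exactly at $r^\star=0$. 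Without a device of this kind your shooting matrix cannot be closed.

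A secondary divergence: your WKB picture (two coalescing turning points, Airy/parabolic-cylinder connection, a macroscopic classically forbidden region) does not match the structure of the problem. In the paper the WKB approximation is used only in a shrinking neighbourhood $[-\pi/\omo,0]$ of the conformal boundary, where the potential is uniformly negative of size $\omo^2\sim m^2$; there are no turning points to match through. The real difficulty there is that the leading-order pure waves $e^{\pm i\omo r^\star}$ have $\wp\equiv0$ identically, so one must compute the next-order correction (driven by $V_1$ and $V_{00}'$) to show $\wp<0$, and treat interference between the two waves separately. Near the Hawking--Reall threshold the relevant degeneration is not turning-point coalescence but the vanishing of the effective coefficient $\lat_{m|m|}/m^2$, which is handled by the uniform angular eigenvalue asymptotics of Lemma~\ref{lem:angular} and by keeping all constants uniform on the compact path. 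Your identification of the separation structure, the one-dimensionality of the horizon-regular branch, and the need for uniformity in $s$ are all correct, but the proposal as written does not contain a mechanism that produces the mode.
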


\paragraph{Remarks on Theorem~\ref{thm:main}.}
\begin{enumerate}
\item The modes~\eqref{eq:defstatmodes} are called \emph{stationary} since $\mathbf{K}(\alt^{[+2]}) = \mathbf{K}(\alt^{[-2]}) = 0$. The existence of such modes shows that there exist non-decaying in time solutions to the Teukolsky equations~\eqref{eq:Teuk}. Moreover, it is strongly suspected to be the offset of a stronger exponential instability, see discussions in the next paragraph.
\item Our result does not say that the parameters $\left(M(s_m),a(s_m),k(s_m)\right)$, for which there exists non-trivial stationary mode solutions, violate the Hawking-Reall bound~\eqref{eq:HR}. This is nevertheless strongly suspected given the numerics in~\cite{Car.Dia.Har.Leh.San14}. 
\item Using the so-called ``metric reconstruction method'' (see for example~\cite{Dia.Rea.San09}), we claim that one can associate to the modes $\alt^{[\pm2]}$ of Theorem~\ref{thm:main} non-trivial stationary solutions to the linearisation of the full Einstein equations~\eqref{eq:EEcosmo} around the Kerr-adS metrics.   
\end{enumerate}

In this paper, we also prove the following high-frequency mode stability result (see also~\cite{Gra.Hol23} for similar results). 
\begin{theorem}\label{thm:modestab}
  For all Kerr-adS admissible parameters $(M,a,k)$ which satisfy the Hawking-Reall bound~\eqref{eq:HR}, there exists $m_0\in\mathbb{N}$ depending on $(M,a,k)$ such that for all $|m|\geq m_0$, there does not exist non-trivial stationary modes~\eqref{eq:defstatmodes} which are regular spin-$\pm2$-weighted complex-valued functions~\eqref{eq:defreg}, solutions to the Teukolsky equations~\eqref{eq:Teuk}, satisfying the regularity conditions~\eqref{eq:defreghor} at the horizon and the boundary conditions~\eqref{eq:TeukBC} at infinity.
\end{theorem}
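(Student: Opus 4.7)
The plan is to reduce the mode problem, via separation of variables and an adS Chandrasekhar-type transformation, to a Regge-Wheeler system whose effective potential is strictly positive under the Hawking-Reall bound in the semiclassical (large-$|m|$) regime, and to conclude by a direct energy identity.

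First, I insert the stationary ansatz~\eqref{eq:defstatmodes} into~\eqref{eq:Teuk}, so that $\pr_t$ acts as $\mp im\om_+$ and $\pr_\varphi$ as $\pm im$. This separates into radial Teukolsky ODEs for $R^{[\pm 2]}(r)$ and spin-weighted spheroidal angular problems for $S^{[\pm 2]}(\varth)$ with eigenvalues $\lat^{[\pm 2]}_m$. A standard Sturm-Liouville/perturbation argument for the spheroidal operator with coupling parameter $am\om_+$ gives the large-$|m|$ asymptotics $\lat^{[\pm 2]}_m = m^2 + O(|m|)$ with uniform pointwise control on $S^{[\pm 2]}$.

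Second, I apply the adS Chandrasekhar transformation of~\cite{Gra.Hol23} to convert the radial Teukolsky ODEs into a coupled Regge-Wheeler-type system for transformed unknowns $\Psi^{[\pm 2]}(r^\star)$ of the schematic form
\[
  \pr_{r^\star}^2 \Psi^{[\pm 2]} \;+\; \bigl(\omo^2 - V^{[\pm 2]}\bigr)\,\Psi^{[\pm 2]} \;=\; (\text{lower-order couplings to }R^{[\pm 2]}\text{ and }\Psi^{[\mp 2]}),
\]
where $\omo := (r^2+a^2)\om - am$. At $\om = m\om_+$ this becomes $\omo = m\,a(r^2-r_+^2)/(r_+^2+a^2)$, in particular vanishing at the horizon, and $V^{[\pm 2]}$ carries the angular eigenvalue $\lat^{[\pm 2]}_m$ together with spin and curvature corrections.

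Third, I analyse $m^{-2}\bigl(V^{[\pm 2]} - \omo^2\bigr)$ in the semiclassical (WKB) regime $|m|\to\infty$, where $1/|m|$ plays the role of the semiclassical parameter. The leading-order expression admits an explicit closed form in terms of the Kerr-adS metric coefficients; a direct computation -- the mode-level shadow of the causality of $\mathbf{K}$ -- shows that it is bounded below by a strictly positive function of $r$ on $(-\infty,0]_{r^\star}$ precisely when the Hawking-Reall bound $a<kr_+^2$ holds. Multiplying the Regge-Wheeler system by $\overline{\Psi^{[\pm 2]}}$ and integrating by parts on $(-\infty,0]_{r^\star}$, the horizon boundary term vanishes thanks to~\eqref{eq:defreghor} combined with $\omo(r_+)=0$, while the boundary term at $r^\star=0$ cancels thanks to~\eqref{eq:TeukBC}, which descend through the transformation to matched Dirichlet-Neumann relations between $\Psi^{[+2]}$ and $\overline{\Psi^{[-2]}}$. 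Positivity of the leading potential, once $|m|$ is large enough to absorb the subleading errors, forces $\Psi^{[\pm 2]}\equiv 0$; inverting the transformation yields $\alt^{[\pm 2]}\equiv 0$.

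The main obstacle is the coupling of the two spins: both the Regge-Wheeler system and the conformal boundary conditions~\eqref{eq:TeukBC} genuinely entangle $\Psi^{[+2]}$ with $\Psi^{[-2]}$, so one cannot proceed on a single scalar equation as for the wave equation; the energy identity must instead be set up on a suitable linear combination of the $\pm 2$ equations so that~\eqref{eq:TeukBC} produces cancelling boundary terms at $r^\star=0$. The threshold $m_0$ enters precisely to absorb the angular eigenvalue error $O(|m|)$, the transport errors from the Chandrasekhar procedure, and the spin-dependent subleading contributions to $V^{[\pm 2]}$ into the strictly positive leading potential.
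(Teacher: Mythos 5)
Your overall strategy (separate variables, show that the effective radial potential is positive for large $|m|$ under~\eqref{eq:HR}, conclude by an energy identity) is the right one, but two of your intermediate steps do not go through as stated, and the second one hides the actual crux of the problem. The passage to an exact Regge--Wheeler system via an adS Chandrasekhar transformation is not available here: as recalled in the introduction of the paper, for $a\neq 0$ the exact correspondence between Teukolsky and Regge--Wheeler equations fails on Kerr-adS, which is precisely why the wave-equation technology does not transfer. The paper avoids any transformation: it works directly with the radial Teukolsky ODE~\eqref{eq:radialstatTeuk}, whose potential $V=V_0+V_{00}-iV_1$ is complex, and observes that $\wp[R]=\le(|R|^2\ri)'$ obeys the \emph{real} identity~\eqref{eq:wpderiv1}, $\wp'=2|R'|^2+2\Re(V)|R|^2$, so the troublesome first-order/imaginary part $V_1$ simply drops out of the energy identity~\eqref{eq:energy}.

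The genuine gap is that you defer the coupling of the two spins to ``a suitable linear combination of the $\pm2$ equations'' producing cancelling boundary terms at $r^\star=0$, without exhibiting it; this is exactly the hard point. The paper's resolution (Lemma~\ref{lem:reciproque}) is that the horizon regularity~\eqref{eq:defreghorradialoriginal} forces $R^{[+2]}$ and $R^{[-2]}$ to be \emph{collinear}: a Frobenius analysis at $r=r_+$ gives indicial roots $\pm1$ with a nonvanishing logarithmic coefficient (by positivity of the Teukolsky--Starobinsky constants), so both spins must lie on the $\De F_-$ branch. Only then do the coupled conditions~\eqref{eq:TeukBC} collapse, via the Wronskian, to the single real scalar condition $\wp[R](0)=0$ of~\eqref{eq:BCstat}, which the positive energy identity contradicts unless $R\equiv0$. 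Without the collinearity statement your boundary terms at $r^\star=0$ have no reason to cancel. A further, more minor point: your angular asymptotics $\lat^{[\pm2]}_m=m^2+O(|m|)$ has the wrong leading constant; the correct limit (Lemma~\ref{lem:angular}) is $\la_{m|m|}/m^2\to\Xi^2(1-a\om_+)^2$, and it is precisely the comparison of this constant with $(\Xi\om_+/k)^2$ --- i.e.\ the sign of $(kr_+^2-a)(kr_+^2+a)$ --- that encodes the Hawking--Reall bound, so the leading coefficient cannot be left unspecified.
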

Theorem~\ref{thm:modestab} is proved in Section~\ref{sec:proofthmmodestab}.

\paragraph{The Hawking-Reall bound and the stability of Kerr-adS spacetimes.}
It was first argued in~\cite{Haw.Rea99} that the Hawking-Reall bound~\eqref{eq:HR} being satisfied or not is expected to have strong consequences on the stability of Kerr-adS spacetimes as solutions to the Einstein equations~\eqref{eq:EEcosmo}. In this paragraph we review previous results or arguments supporting this expectation.\\

First consider the conformal wave equation with Dirichlet boundary conditions
\begin{align}\label{eq:wave}
  0 & = \Box_{g_{\mathrm{KadS}}} \psi + 2k^2\psi, & \psi\big|_{r^\star=0} = 0, 
\end{align}
which can be seen as a toy model for the Teukolsky equations~\eqref{eq:Teuk} with boundary conditions~\eqref{eq:TeukBC}. It has been proved in~\cite{Hol09,Hol.War14} that all solutions to~\eqref{eq:wave} remain uniformly bounded, as long as the Hawking-Reall bound~\eqref{eq:HR} holds. Upon additional assumptions, it has also been shown that the solutions to~\eqref{eq:wave} decay in time~\cite{Hol.Smu13}. These results rely on the existence of the globally causal Killing field $\mathbf{K}$ and of a well-behaved energy-momentum tensor for the wave equation~\eqref{eq:wave}. \emph{A contrario}, for Kerr-adS spacetimes such that $a>kr_+^2$, stationary and (exponentially) growing in time solutions to~\eqref{eq:wave} have been constructed in~\cite{Dol17}. We will discuss the proof of~\cite{Dol17} in the next paragraph. Let us also say that exponentially growing modes were constructed in~\cite{Zhe24a} for Klein-Gordon equations on Reissner-Nordström-adS spacetimes.\\

For the Teukolsky equations~\eqref{eq:Teuk}, the techniques used for the wave equation~\eqref{eq:wave} do not directly apply by lack of a well-behaved energy-momentum tensor (in rough terms, this is due to the first order derivatives in~\eqref{eq:Teuk}). Nevertheless, it was proved in~\cite{Gra.Hol24a} that all solutions to~\eqref{eq:Teuk} on Schwarzschild-adS ($a=0$) are bounded and decay in time, which was used in~\cite{Gra.Hol24} to show that Schwarzschild-adS is linearly stable as a solution to the Einstein equations~\eqref{eq:EEcosmo}. This relies on the so-called \emph{Chandrasekhar transformations} which give a correspondence between solutions to the Teukolsky equations and solutions to conformal wave equations with potential known as the \emph{Regge-Wheeler equations}, for which the techniques used for the wave equation~\eqref{eq:wave} can be adapted. On rotating Kerr-adS spacetimes ($a\neq0$) such an (exact) correspondence fails to hold and the only rigorous stability result is mode stability under some additional hypothesis on the black hole parameters~\cite{Gra.Hol23}.\\

On the side of instability, it has been argued in~\cite{Car.Dia04,Car.Dia.Lem.Yos04,Car.Dia.Yos06} that when the Hawking-Reall bound is violated (and under slow rotation and small radius hypothesis), exponentially growing solutions for the Teukolsky equations should exist. Numerical evidences were given in~\cite{Car.Dia.Har.Leh.San14}. Let us highlight one feature of~\cite{Car.Dia.Har.Leh.San14} which was later proved in~\cite{Gra.Hol23} and is particularly relevant for the present paper: at the offset of instability, \emph{i.e.} for Kerr-adS parameters ``just before'' exponentially growing solutions to~\eqref{eq:Teuk} are numerically shown to exist, there should exist stationary modes~\eqref{eq:defstatmodes} solutions to~\eqref{eq:Teuk}. From the existence of these stationary modes, it is expected that exact solutions to the Einstein equations~\eqref{eq:EEcosmo} with a Killing field bifurcating from the Kerr-adS solutions (called ``black resonators'') should exist~\cite{Dia.San.Way15} (see also~\cite{Cho.Shl17} and~\cite{Zhe24} for rigorous proofs of this type of result on Kerr and on Reissner-Nordström-adS spacetimes respectively).\\

Finally, we mention that~\cite{Gre.Hol.Ish.Wal16} have shown that for all Kerr-adS spacetimes such that $a > kr_+^2$, there exists solutions to the linearisation of the Einstein equations~\eqref{eq:EEcosmo} which do not decay in time. The result of~\cite{Gre.Hol.Ish.Wal16} is based on the conservation/decay of a canonical energy for the linearised Einstein equations~\eqref{eq:EEcosmo}.\footnote{The canonical energy does not come from an energy momentum tensor but from a symplectic form. Hence it is less clear how to use it to prove quantitative boundedness or decay results. See discussions and results in that direction in~\cite{Col23}.} If the Hawking-Reall bound is violated, perturbations can be constructed so that this canonical energy is initially negative, hence the canonical energy cannot decay to $0$ at large times. 
Note that this result says that Kerr-adS spacetimes are linearly unstable solutions to the Einstein equations~\eqref{eq:EEcosmo} but does not give the nature of the instability, \emph{i.e.} whether there exist stationary, oscillating or exponentially growing solutions to the linearisation of~\eqref{eq:EEcosmo}.


\paragraph{The wave equation \emph{vs} Teukolsky equations.}
In this paragraph we review the method of proof used in~\cite{Dol17} to construct stationary solutions to the wave equation~\eqref{eq:wave} on Kerr-adS spacetimes.\footnote{Other wave equations and other boundary conditions were also considered in~\cite{Dol17}. It is further shown in~\cite{Dol17} that the existence of exponentially growing solutions to~\eqref{eq:wave} can be inferred from the existence of stationary modes.}  We mention that the proof in~\cite{Dol17} adapts arguments of the seminal~\cite{Shl14} and that similar arguments were also used in~\cite{Col21},~\cite{Zhe24a}.\\

First, with the stationary mode ansatz
\begin{align}
  \label{eq:wavemodeansatz}
  u(t,r,\varth,\varphi)=e^{-im\om_+ t}R(r)S(\varth)e^{+im\varphi},
\end{align}
the wave equation~\eqref{eq:wave} separates into a radial and an angular ODE
\begin{align}
  0 & = - \pr^2_{r^\star}R + w\le(\lambda-\gamma m^2+V_0\ri)R, \label{eq:radialwave}\\
  0 & = - \frac{1}{\sin\varth}\pr_\varth\le(\De_\varth\sin\varth\pr_\varth S\ri) + \le(G_0 + G_1m^2 - \lambda\ri) S,\label{eq:angularwave}
\end{align}
where $\la$ is the separation constant, where $w,V_0:(-\infty,0)\to\mathbb{R}$ and $G_0,G_1:(0,\pi)\to\mathbb{R}$ are smooth functions, depending only on the black hole parameters $(M,a,k)$, and where
\begin{align*}
  \gamma & := \Xi^2 + \Xi^2\frac{(kr_+^2-a)(kr_+^2+a)}{k^2(r_+^2+a^2)^2}.
\end{align*}
Regularity at the horizon and the Dirichlet boundary conditions at infinity~\eqref{eq:wave} impose that
\begin{align}
  R \quad \text{extends as a smooth function at $r=r_+$,}  && R(r^\star=0)  = 0, \label{eq:limitswaveR}
\end{align}
and regularity at the poles $\varth=0,\pi$, imposes that
\begin{align}
   S(0) & = S(\pi) = 0.\label{eq:limitswaveS}
\end{align}
Equation~\eqref{eq:angularwave} with~\eqref{eq:limitswaveS} defines an eigenvalue problem with boundary conditions. This eigenvalue problem admits a fundamental eigenvalue and eigenfunction $\la_m\in\mathbb{R}$ and $S_m:(0,\pi)\to\mathbb{R}$ which realise the infimum of the Rayleigh quotient
\begin{align}\label{eq:Rayleighlambda}
  \la_m & = \inf_{\int|S|^2\sin\varth\d\varth=1} \int_{0}^\pi\le(\De_\varth\le|\pr_{\varth}S\ri|^2+(G_0+G_1m^2)|S|^2\ri)\,\sin\varth\d\varth.
\end{align}
Choosing $S=S_{m}$ in the ansatz~\eqref{eq:wavemodeansatz}, the goal is to solve~\eqref{eq:radialwave} with $\la=\la_{m}$. To that end, the trick is to consider an auxiliary eigenvalue problem:
\begin{align}\label{eq:auxeigenvalue}
  \si R & = - \pr^2_{r^\star}R + w\le(\lambda_m-\gamma m^2+V_0\ri)R,
\end{align}
with boundary conditions~\eqref{eq:limitswaveR}. This eigenvalue problem admits a fundamental eigenvalue and eigenfunction $\si_m\in\mathbb{R}$ and $R_m:(-\infty,0)\to\mathbb{R}$ which realise the infimum of the Rayleigh quotient
\begin{align}\label{eq:Rayleighsigma}
  \si_m & = \inf_{\int|R|^2\d r^\star=1} \int_{-\infty}^0\le(\le|\pr_{r^\star}R\ri|^2+w\le(\lambda_m-\gamma m^2+V_0\ri)|R|^2\ri)\,\d r^\star.
\end{align}
Using the minimisation property~\eqref{eq:Rayleighlambda} and that the minimum of $G_1$ on $(0,\pi)$ is $\Xi^2$, we have the semi-classical limit\footnote{See the proof of Lemma~\ref{lem:angular} for a justification of this type of result.}
\begin{align*}
  \frac{\la_m}{m^2} \xrightarrow{|m|\to+\infty} \Xi^2.
\end{align*}
Thus, we have the asymptotic
\begin{align*}
  \lambda_m-\gamma m^2+V_0 \sim m^2\Xi^2\frac{(kr_+^2-a)(kr_+^2+a)}{k^2(r_+^2+a^2)^2}, && \text{when $|m|\to+\infty$,}
\end{align*}
\emph{i.e.} the positivity of the potential in the Rayleigh quotient~\eqref{eq:Rayleighsigma} depends on the Hawking-Reall bound~\eqref{eq:HR} being satisfied or not. If~\eqref{eq:HR} holds the potential is positive and one has $\si_m>0$ for $|m|$ sufficiently large. \emph{A contrario}, if~\eqref{eq:HR} is strictly violated, \emph{i.e.} $a>kr_+^2$, a semi-classical argument gives that $\si_m<0$ for $|m|$ sufficiently large.
Thus, along a continuous path of admissible Kerr-adS parameters $\le(M(s),a(s),k(s)\ri)$ such that the Hawking-Reall bound holds at one end $a(0)<k(0)(r_+(0))^2$ and is violated at the other $a(1)>k(1)(r_+(1))^2$, there exists parameters $\le(M(s_m),a(s_m),k(s_m)\ri)$ such that $\si_m=0$ provided that $|m|$ is sufficiently large depending on $\le(M(0),a(0),k(0)\ri)$ and $\le(M(1),a(1),k(1)\ri)$. In that case, the mode ansatz~\eqref{eq:wavemodeansatz} defines a regular solution to the wave equation with boundary condition~\eqref{eq:wave}, and this concludes the argument of~\cite{Dol17}.\\

We now comment on the main differences with the Teukolsky case.
\begin{enumerate}
\item The first obvious difference is that there is not one but two (one for each spin $[+2]$ and $[-2]$) radial and angular ODEs corresponding to~\eqref{eq:radialwave} and \eqref{eq:angularwave}, and that solutions to the two $[\pm2]$ radial equations have to be matched at $r^\star=0$ using the boundary conditions~\eqref{eq:TeukBC}.
\item The second -- and most critical -- difference is that, due to the first order terms in the Teukolsky equation~\eqref{eq:Teuk}, the potentials and the unknown $R$ in the radial equations corresponding to~\eqref{eq:radialwave} are \underline{complex-valued} (see Section~\ref{sec:prelim} for exact formulas). Hence one cannot define a real-valued spectral parameter $\si_m$ and apply the above discussed continuity argument of~\cite{Dol17}, which critically relies on the intermediate value theorem. 
\end{enumerate}



\paragraph{Overview of the proof of the main theorem.}
The proof of Theorem~\ref{thm:main} decomposes along the following steps.
\begin{enumerate}
\item The first step (Section~\ref{sec:shooting}) is to show that one can reduce the two radial Teukolsky ODE in the $[\pm2]$ cases to one radial ODE for a single quantity $R$ and that, defining
  \begin{align*}
    \wp[R](r^\star) & := 
                      \pr_{r^\star}\le(\le|R(r^\star)\ri|^2\ri),
  \end{align*}
  the boundary conditions for $R$ become
  \begin{align}\label{eq:BCoverview}
    \De^{-1}R~\text{extends smoothly at $r=r_+$} && \text{and} && \wp[R](0) =0.
  \end{align}
  The crucial point is that $\wp[R]:(-\infty,0)\to\mathbb{R}$ is manifestly real-valued and can therefore be used as a ``bullet'' quantity in a continuity argument, the ``target'' boundary condition being $\wp[R](0) =0$. In the rest of the argument we ``shoot'' $R$ from $r=r_+$, \emph{i.e.} we always consider the unique (normalised) ODE solution $R$ such that $\De^{-1}R$ extends smoothly at $r=r_+$. In particular, this implies that $\wp[R](r^\star) \to 0$ when $r^\star\to-\infty$.
\item The second step is to show that
  \begin{itemize}
  \item $\wp[R](r^\star)$ is always positive close to $r^\star=-\infty$,
  \item $\wp[R](r^\star)$ stays positive for all $r^\star\in(-\infty,0]$ if the Hawking-Reall bound~\eqref{eq:HR} is satisfied and $|m|$ is sufficiently large depending on $(M,a,k)$.
  \end{itemize}
  \emph{En passant}, one sees that the last boundary condition~\eqref{eq:BCoverview} cannot be satisfied and this directly proves the mode stability of Theorem~\ref{thm:modestab} (see Section~\ref{sec:proofthmmodestab}). 
\item The third step is to show that if the Hawking-Reall bound~\eqref{eq:HR} is strictly violated, there exists $r^\star_0\in(-\infty,0]$ such that $\wp[R](r^\star_0)<0$, provided that $|m|$ is sufficiently large. This is done in Section~\ref{sec:envelope} and uses a WKB approximation close to $r^\star=0$. In fact, we prove that for \emph{any} solution to the radial Teukolsky ODE, $\wp[R]$ becomes strictly negative somewhere close to $r^\star=0$. This is delicate since the first term of the WKB approximation close to $r^\star=0$ are the pure waves
  \begin{align}\label{eq:purewave}
    e^{+i\omo r^\star} && \text{and} && e^{-i\omo r^\star},
  \end{align}
  where $\omo$ is a (high) frequency parameter (see Section~\ref{sec:WKB}), for which one has $\wp = 0$ identically. Hence one has to compute the next term (Section~\ref{sec:nointerferences}) to show that $\wp<0$ (uniformly close to $r^\star=0$) for each of the WKB approximation corresponding to the pure waves~\eqref{eq:purewave}. For non-trivial linear combinations of the pure waves~\eqref{eq:purewave}, one has interference and $\wp$ takes negative values at some point (Section~\ref{sec:proofenvelope}). 
\item The conclusion is as follows: For a continuous path of parameters $s\in[0,1]\mapsto(M(s),a(s),k(s))$, such that $a(0)<k(0)(r_+(0))^2$ and $a(1)>k(1)(r_+(1))^2$, one considers the first parameter $s_m>0$ such that $\wp[R]$ is not strictly positive on the whole domain $(-\infty,0)$. This means that $\wp[R]$ must vanish somewhere on $(-\infty,0]$ and the goal is to prove that, at that parameter $s_m$, this vanishing must occur at the end of the domain, \emph{i.e.} that the desired boundary condition $\wp[R](0)=0$ holds. To that end, we show in Section~\ref{sec:norebound} that the ``bullet'' $\wp[R]$ cannot ``re-bound'' on $\wp=0$, \emph{i.e.} that if $\wp[R]=\wp[R]'=0$ at $r^\star_0$, either $R$ is trivial or $\wp[R]''<0$ at $r^\star_0$, provided that $|m|$ is sufficiently large. Here it is crucial that the estimates are \emph{uniform} for all parameters on the path, \emph{i.e.} that they do not break down when the Hawking-Reall threshold is crossed.
\end{enumerate}
Note that Steps 2 to 4 all require bounds on the angular eigenvalues $\la$ which are proved in Section~\ref{sec:angular}. The above scheme of proof is formalised in Section~\ref{sec:proofthmmain}.

\paragraph{Notations.} We write ``$C=C(a,b)>0$'' for ``there exists a constant $C$ depending only on $a,b$ and universal constants'' and ``$A\les_{a,b}B$'' for ``$A \leq C B$ with $C=C(a,b)>0$''.

\paragraph{Acknowledgements.}
I thank Dietrich H\"afner for many interesting discussions around this project. I thank Gustav Holzegel for introducing me to this subject, re-reading and comments.

\section{The decoupled radial Teukolsky problem}\label{sec:shooting}
\subsection{The separated Teukolsky equations}\label{sec:prelim}
In this section, we reduce our results to ODE analysis, using the separability property of Teukolsky equations. We will use the following oblate spheroidal harmonics basis for spin-$\pm2$-weighted complex functions.
\begin{lemma}\label{lem:hilbert}
  There exists an Hilbert basis $\left(S_{m\ell}(\varth)e^{+im\varphi}\right)_{m\in\mathbb{Z},\ell\geq |m|} $ of the smooth spin-$+2$-weighted complex functions, with associated real eigenvalues $\left(\la_{m\ell}\right)_{\ell\geq 2,\ell\geq |m|}$ given by\footnote{Equation~\eqref{eq:defangla} coincides with~\cite[(3.20), (3.22)]{Kha83}.}
  \begin{align}\label{eq:defangla}
    \begin{aligned}
    -\la_{m\ell}S_{m\ell} & = \sqrt{\De_\varth}\LL_{-1}^\dg\sqrt{\De_\varth}\LL_2S_{m\ell}+\le(-6a(\Xi\om_+m)\cos\varth+6a^2k^2\cos^2\varth\ri)S_{m\ell},
    \end{aligned}
  \end{align}
  where
  \begin{align*}
    \LL_{-1}^\dg & := \pr_\varth + \frac{\Xi H}{\De_\varth} - \pr_\varth\le(\log\le(\sqrt{\De_\varth}\sin\varth\ri)\ri), & \LL_2 & := \pr_\varth - \frac{\Xi H}{\De_\varth} +2 \pr_\varth\le(\log\le(\sqrt{\De_\varth}\sin\varth\ri)\ri),
  \end{align*}
  and
  \begin{align*}
     H & := m \le(a\om_+\sin\varth-\frac1{\sin\varth}\ri).
  \end{align*}
  We sort the eigenvalues $\la_{m\ell}$ so that, for all $m\in\mathbb{Z}$, the sequence $(\la_{m\ell})_{\ell\geq \max(2,|m|)}$ is increasing.
\end{lemma}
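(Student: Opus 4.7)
The natural strategy is to Fourier-decompose in $\varphi$ and apply singular Sturm--Liouville theory in each azimuthal sector. A smooth spin-$+2$-weighted complex function on $\SSS^2$ admits a Fourier expansion $\sum_{m\in\mathbb{Z}}f_m(\varth)e^{+im\varphi}$, and the smoothness is equivalent to $f_m\in \mathcal{C}^\infty((0,\pi))$ together with the classical regularity conditions $f_m(\varth) = O((\sin\varth)^{|m-2|})$ near $\varth=0$ and $f_m(\varth)=O((\sin\varth)^{|m+2|})$ near $\varth=\pi$. Since the operator on the right-hand side of~\eqref{eq:defangla} preserves the Fourier decomposition in $\varphi$, it suffices to diagonalise, for each fixed $m\in\mathbb{Z}$, the reduced ordinary differential operator
\begin{align*}
\mathcal{A}_m f := \sqrt{\De_\varth}\LL_{-1}^\dg\sqrt{\De_\varth}\LL_2 f + \le(-6a\Xi\om_+ m\cos\varth+6a^2k^2\cos^2\varth\ri)f
\end{align*}
acting on functions of $\varth$ alone on $(0,\pi)$ with the above endpoint conditions.

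The second step is to check that $\mathcal{A}_m$ is formally self-adjoint on the weighted space $L^2\le((0,\pi),\sin\varth\,\d\varth\ri)$. A direct computation expanding $\LL_{-1}^\dg$ and $\LL_2$ shows that the logarithmic-derivative shifts are designed precisely to make $\sqrt{\De_\varth}\LL_{-1}^\dg\sqrt{\De_\varth}$ the formal adjoint of $\LL_2$ with respect to this measure. One then rewrites $\mathcal{A}_m$ in the manifest Sturm--Liouville form
\begin{align*}
\mathcal{A}_m f = -\frac{1}{\sin\varth}\pr_\varth\le(\De_\varth\sin\varth\pr_\varth f\ri) + V_m(\varth)f,
\end{align*}
with $V_m$ a real potential, bounded below, exhibiting centrifugal poles with leading coefficients $(m-2)^2/\sin^2\varth$ near $\varth=0$ and $(m+2)^2/\sin^2\varth$ near $\varth=\pi$. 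A Frobenius analysis at each endpoint yields two local solutions with indicial exponents $\pm|m-2|$ and $\pm|m+2|$ respectively; imposing the spin-$+2$ regularity selects the positive exponent at each pole, which pins down uniquely the Friedrichs self-adjoint extension of $\mathcal{A}_m$ associated to the closed quadratic form.

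Standard spectral theory for this self-adjoint extension then yields, for each $m\in\mathbb{Z}$, a discrete real spectrum $\le(\la_{m\ell}\ri)_{\ell\geq\max(2,|m|)}$ accumulating only at $+\infty$, with eigenfunctions $S_{m\ell}$ smooth on $(0,\pi)$ satisfying the regularity conditions at the poles and forming an orthonormal basis of the $m$-th sector. Reassembling these bases over $m\in\mathbb{Z}$ via the Fourier decomposition in $\varphi$ produces the claimed Hilbert basis of spin-$+2$-weighted complex functions on $\SSS^2$. I expect the main technical obstacle to be the endpoint analysis at $\varth=0,\pi$: one must check that the spin-$+2$ regularity asymptotics indeed single out a unique self-adjoint realisation (rather than a more general boundary condition), and justify the lower bound $\ell\geq\max(2,|m|)$, which encodes both the azimuthal constraint and the spin-weight constraint on the admissible angular momentum.
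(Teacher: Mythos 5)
The paper does not actually prove this lemma---it is invoked as a standard fact (cf.\ the footnote citing Khanal, and Remark~\ref{rem:hilbertbis}, whose Sturm--Liouville rewriting \eqref{eq:defanglabis} is precisely your second step)---so your proposal supplies what the paper leaves implicit, and it does so by the standard, correct route: azimuthal Fourier decomposition, formal self-adjointness of the reduced operator on $L^2((0,\pi),\sin\varth\,\d\varth)$, Frobenius analysis at the poles selecting the regular branch (limit-point for $|m\pm2|\geq1$, Friedrichs extension for $m=\mp2$), and discreteness of the spectrum of the resulting self-adjoint operator. Two small points to tighten: expanding the singular part of $G_m$ from \eqref{eq:defanglabis} near $\varth=0$ gives the centrifugal coefficient $(m+2)^2/\sin^2\varth$ rather than $(m-2)^2/\sin^2\varth$ (your two poles appear to be swapped, a convention that must in any case be made consistent with the pole asymptotics defining smooth spin-$+2$-weighted functions, or the endpoint analysis will not close), and the labelling $\ell\geq\max(2,|m|)$ needs no justification via deformation to the round-sphere harmonics here, since the paper uses it purely as a convention for enumerating the increasingly ordered eigenvalues.
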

\begin{remark}\label{rem:spin-2}
  The functions $S_{m\ell}$ from Lemma~\ref{lem:hilbert} are real-valued and $\left(S_{m\ell}(\varth)e^{-im\varphi}\right)_{m\in\mathbb{Z},\ell\geq |m|}$ defines an Hilbert basis of the smooth spin-$-2$-weighted complex functions.
\end{remark}
\begin{remark}\label{rem:hilbertbis}
  The eigenvalue equation~\eqref{eq:defangla} rewrites as
  \begin{align}\label{eq:defanglabis}
    \begin{aligned}
      -\la_{m\ell}S_{m\ell} & = \frac{1}{\sin\varth}\pr_\varth\le(\De_\varth\sin\varth\pr_\varth S_{m\ell}\ri) - G_m(\varth) S_{m\ell},
    \end{aligned}
  \end{align}
  with
  \begin{align*}
    G_m(\varth) & := \frac{\Xi^2H^2}{\De_\varth} +2\De_\varth +2a^2k^2 +4 \Xi^2\frac{\cot^2\varth}{\De_\varth} - 4 \frac{\Xi^2 H\cot\varth}{\De_\varth} +8 a^2k^2\frac{\Xi m\cos\varth}{\De_\varth} + 8 \Xi(\Xi a m\om_+)\cos\varth. 
  \end{align*}
   In particular, for all $|m|\geq 2$, $\la_{m|m|}$ is the fundamental eigenvalue and satisfies the minimisation property
  \begin{align}\label{eq:minimisation}
    \la_{m|m|} & = \inf \le( \int_{0}^\pi\le(\De_\varth|\pr_\varth S|^2 + G_m(\varth)|S|^2\ri)\sin\varth\d\varth\ri). 
  \end{align}
  where the infimum is taken on all $S\in C^\infty_c((0,\pi),\mathbb{R})$ such that $\int_0^\pi |S|^2\sin\varth\d\varth=1$.
\end{remark}

We define the following \emph{stationary radial Teukolsky equation with separation constant $\la$}:\footnote{Equation~\eqref{eq:radialstatTeuk} is a rewriting of~\cite[(3.19), (3.21)]{Kha83} with the normalisation~\eqref{eq:renormalt}.}
\begin{align}\label{eq:radialstatTeuk}
  0 & = -R'' + V[\la]R,
\end{align}
where we denote by $'$ derivation with respect to $r^\star$, and where $V[\la]=V_0[\la]+V_{00}-iV_1$ with
\begin{align*}
  \begin{aligned}
    V_0[\la] & := \frac{\De}{(r^2+a^2)^2}\le(\la-2-a^2k^2\ri) - \le(\Xi\om_+m\ri)^2\frac{(r-r_+)^2(r+r_+)^2}{(r^2+a^2)^2} \\
    & = \frac{\De}{(r^2+a^2)^2}\lat + \le(\frac{\Xi\om_+m}{k}\ri)^2\frac{\De-k^2(r-r_+)^2(r+r_+)^2}{(r^2+a^2)^2} \\
    V_{00} & := \frac{(\pr_r\De)^2}{(r^2 + a^2)^2} + \frac{\De}{(r^2+a^2)^2}\le(2+a^2k^2-6k^2r^2-\pr_r^2\De\ri) +r\frac{\De\pr_r\De}{(r^2+a^2)^{3}} - (2r^2-a^2)\frac{\De^2}{(r^2+a^2)^4} \\
    V_1 & := \frac{\pr_r\Delta}{(r^2+a^2)^2}2(\Xi\om_+m) \left(r_+^2-r^2\right) + \frac{\Delta}{(r^2+a^2)^2}8\Xi \om_+m r,
  \end{aligned}
\end{align*}
with
\begin{align*}
  \lat := \la - 2-a^2k^2-\left(\frac{\Xi\om_+m}{k}\right)^2.
\end{align*}
\begin{remark}
  The potentials $V_0[\la], V_{00}, V_1:(r_+,+\infty)_r\to\mathbb{R}$ extend as smooth functions both on the horizon, \emph{i.e.} as functions on $[r_+,+\infty)_r$ and at infinity, \emph{i.e.} as functions on $(-\infty,0]_{r^\star}$. In particular, $r^\star=0$ is an ordinary regular point for the ODE~\eqref{eq:radialstatTeuk}. 
\end{remark}

We have the following lemma.
\begin{lemma}\label{lem:sep}
   For all $m\in\mathbb{Z},\ell\geq |m|$, the maps 
  \begin{align}\label{eq:statansatz}
    \begin{aligned}
      \alt^{[+2]}(t,r,\varth,\varphi) & = e^{-im\om_+ t}R^{[+2]}(r) S_{m\ell}(\varth) e^{+im\varphi}, \\
      \alt^{[-2]}(t,r,\varth,\varphi) & = e^{+im\om_+ t} R^{[-2]}(r) S_{m\ell}(\varth) e^{-im\varphi},
    \end{aligned}
  \end{align}
  are smooth (up to infinity) spacetime spin-$\pm2$-weighted complex functions~\eqref{eq:defreg} satisfying the Teukolsky equations~\eqref{eq:Teuk} if and only if $R^{[\pm2]}$ are smooth complex-valued functions on $(-\infty,0]_{r^\star}$, solutions to the (same) radial Teukolsky equation~\eqref{eq:radialstatTeuk} with separation constant $\la=\la_{m\ell}$. Moreover,
  \begin{itemize}
  \item $\alt^{[\pm2]}$ are regular at the horizon in the sense of~\eqref{eq:defreghor} iff there exists smooth functions $F^{[\pm2]}:[r_+,+\infty)\to\CCC$ such that 
    \begin{align}
      \label{eq:defreghorradialoriginal}
      \De^{\pm1}(r) R^{[\pm2]}(r) & =  F^{[\pm2]}(r),
    \end{align}
    for all $r\in(r_+,+\infty)$,
  \item $\alt^{[\pm2]}$ satisfy the boundary conditions at infinity~\eqref{eq:TeukBC} iff
    \begin{align}\label{eq:TeukBCradial}
      \left(R^{[+2]} - \overline{R^{[-2]}}\right)\Big|_{r^\star=0} & = 0, & \left(\left(R^{[+2]}\right)' + \left(\overline{R^{[-2]}}\right)'\right)\Big|_{r^\star=0} & = 0. 
    \end{align}
  \end{itemize}
\end{lemma}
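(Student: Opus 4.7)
The proof amounts to verifying three separate equivalences: the reduction of the PDE~\eqref{eq:Teuk} on the ansatz~\eqref{eq:statansatz} to the radial ODE~\eqref{eq:radialstatTeuk}, the translation of the horizon regularity condition~\eqref{eq:defreghor} to~\eqref{eq:defreghorradialoriginal}, and the translation of the boundary conditions~\eqref{eq:TeukBC} to~\eqref{eq:TeukBCradial}. Each is a direct computation, with the second being the most delicate.

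For the PDE reduction, I substitute~\eqref{eq:statansatz} into~\eqref{eq:Teuk}. Since all coefficients in~\eqref{eq:Teuk} depend only on $(r,\varth)$, the exponentials $e^{\mp im\om_+ t}e^{\pm im\varphi}$ factor out, and the remaining equation in $(r,\varth)$ separates by Khanal's argument~\cite{Kha83}. Its angular part matches~\eqref{eq:defangla} exactly when the separation constant equals $\la_{m\ell}$ and the angular profile is $S_{m\ell}$; its radial part, after inserting the renormalization~\eqref{eq:renormalt} and rewriting derivatives with respect to $r^\star$, reduces to~\eqref{eq:radialstatTeuk}. The key payoff of the specific rescaling in~\eqref{eq:renormalt} is that the $[+2]$ and $[-2]$ cases yield the \emph{same} radial ODE. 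The equivalence then follows because $S_{m\ell}(\varth)e^{\pm im\varphi}$ is not identically zero.

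For the horizon regularity, the computation relies on two observations. First, since the ansatz satisfies $\mathbf{K}\alt^{[\pm 2]}=0$, one computes
\[
  L\alt^{[\pm 2]} = \left(\mp i\Xi m\left(\om_+ - \tfrac{a}{r^2+a^2}\right) + \pr_{r^\star}\right)\alt^{[\pm 2]},
\]
and similarly for $\Lb$ with $-\pr_{r^\star}$ in place of $\pr_{r^\star}$. Second, both the mismatch $\om_+ - \frac{a}{r^2+a^2} = a(r^2-r_+^2)/((r_+^2+a^2)(r^2+a^2))$ and $\pr_{r^\star} = (\De/(r^2+a^2))\pr_r$ vanish to first order in $\De$ at $r=r_+$. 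Hence $L$ acts on the ansatz as a smooth radial operator at $r_+$, while $\De^{-1}\Lb$ acts as $-(r^2+a^2)^{-1}\pr_r$ plus a bounded multiplicative term. Switching from $t$ to $t^\star$ multiplies the ansatz by a unit-modulus factor $e^{\pm im\om_+(r^\star - \frac1k\arctan(kr))}$ which, although oscillating as $r\to r_+$, preserves boundedness under the operators above. Iterating $L^p(\De^{-1}\Lb)^q$ applied to $\De^{\pm 1}\alt^{[\pm 2]}$ and proceeding by induction on $p+q$, one finds that~\eqref{eq:defreghor} is equivalent to $\pr_r^q(\De^{\pm 1}R^{[\pm 2]})$ being bounded at $r=r_+$ for all $q\geq 0$, i.e., to~\eqref{eq:defreghorradialoriginal}.

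For the boundary conditions at infinity, substituting~\eqref{eq:statansatz} into~\eqref{eq:TeukBC} and using that $S_{m\ell}$ is real-valued (Remark~\ref{rem:spin-2}) together with $\overline{e^{+im\om_+ t}e^{-im\varphi}} = e^{-im\om_+ t}e^{+im\varphi}$ produces a common nonzero factor $e^{-im\om_+ t + im\varphi}S_{m\ell}(\varth)$ on both sides of each equation. Dividing through yields~\eqref{eq:TeukBCradial}. The main obstacle of the proof is the horizon regularity step, which requires careful bookkeeping of the interplay between the singular normalization $\De^{-1}$ of $\Lb$ and the vanishing of the $\mathbf{K}$-coefficient $\om_+ - a/(r^2+a^2)$ on the ansatz, together with an inductive argument to cover all orders of derivatives.
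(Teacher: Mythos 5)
Your proposal is correct and follows essentially the same route as the paper's (much terser) proof: the separation is delegated to Khanal's computation, the horizon equivalence rests on the same key observation that on stationary modes $\De^{-1}\Lb$ reduces to a regular first-order radial operator at $r=r_+$ because $(r^2-r_+^2)/\De$ is smooth there, and the boundary-condition translation is the same direct substitution using that $S_{m\ell}$ is real. Your write-up simply supplies more of the bookkeeping (the explicit mismatch $\om_+-a/(r^2+a^2)$, the unit-modulus factor from passing to $t^\star$, and the induction on $p+q$) that the paper leaves implicit.
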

\begin{proof}
  The equivalence between the Teukolsky equations~\eqref{eq:Teuk} and the radial Teukolsky equations~\eqref{eq:radialstatTeuk}, for modes of the form~\eqref{eq:statansatz}, is a direct computation which is left to the reader (see~\cite{Kha83}). The fact that the regularity condition at the horizon for $\alt^{[\pm2]}$ \eqref{eq:defreghor} and for $R^{[\pm2]}$ \eqref{eq:defreghorradialoriginal} are equivalent follows from the fact that for stationary modes~\eqref{eq:statansatz}, 
  \begin{align*}
    \De^{-1}\Lb & = -\le(a\Xi m \frac{r_+^2+a^2}{r^2+a^2} \frac{(r^2-r_+^2)}{\De}+\frac{\pr_r}{r^2+a^2}\ri),
  \end{align*}
  and that $\frac{(r^2-r_+^2)}{\De}$ is a smooth function at $r=r_+$, \emph{i.e.} $\De^{-1}\Lb$ is a regular first order operator at $r=r_+$. The equivalence between the boundary conditions~\eqref{eq:TeukBC} and~\eqref{eq:TeukBCradial} is immediate.
\end{proof}

\subsection{The shooting ``bullet'' and ``target''}\label{sec:reduction}
The following lemma is the key to our continuity argument.
\begin{lemma}\label{lem:target}
  Let $R$ be a smooth complex-valued function on $(-\infty,0]_{r^\star}$, solution to the radial Teukolsky equation~\eqref{eq:radialstatTeuk}, regular at the horizon in the sense that there exists a smooth function $F:[r_+,+\infty)\to\mathbb{C}$ such that 
  \begin{align}\label{eq:defreghorradial}
    R(r) & = \De(r) F(r),
  \end{align}
  for all $r > r_+$, and which satisfies the ``target'' boundary condition
  \begin{align}\label{eq:BCstat}
    \left(\bar{R}R' + R\bar{R}'\right)\Big|_{r^\star=0} = 0.
  \end{align}
  Then, there exists $\kappa\in\mathbb{C}^\ast$ such that the maps $R^{[+2]} := R$ and $R^{[-2]}:=\kappa R$ are solutions to the radial Teukolsky equation~\eqref{eq:radialstatTeuk}, regular at the horizon in the sense of~\eqref{eq:defreghorradialoriginal} and satisfy the coupled boundary conditions~\eqref{eq:TeukBCradial}.
\end{lemma}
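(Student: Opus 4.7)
The plan is to reduce the lemma to a small linear-algebra consistency check at $r^\star=0$. By Lemma~\ref{lem:sep}, the spin-$+2$ and spin-$-2$ radial unknowns satisfy the \emph{same} radial Teukolsky equation~\eqref{eq:radialstatTeuk}. So given $R$ as in the hypothesis, I will simply take
\begin{align*}
  R^{[+2]} := R, && R^{[-2]} := \kappa R,
\end{align*}
for some scalar $\kappa\in\mathbb{C}^\ast$ to be determined; both then solve~\eqref{eq:radialstatTeuk} tautologically. Only three items remain to be verified: horizon regularity~\eqref{eq:defreghorradialoriginal} for each spin, and the two coupled boundary conditions~\eqref{eq:TeukBCradial} at infinity.

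For horizon regularity I will use the assumption $R = \Delta F$ with $F$ smooth on $[r_+,+\infty)$. Then $\Delta R^{[+2]} = \Delta^2 F$ and $\Delta^{-1}R^{[-2]} = \kappa F$ are both smooth on $[r_+,+\infty)$, so~\eqref{eq:defreghorradialoriginal} holds trivially with $F^{[+2]} := \Delta^2 F$ and $F^{[-2]} := \kappa F$ (note that the hypothesis provides \emph{more} regularity for $R^{[+2]}$ than actually required).

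The heart of the argument is the boundary condition. Evaluating~\eqref{eq:TeukBCradial} at $r^\star=0$ with the ansatz above produces the two scalar equations
\begin{align*}
  \bar{R}(0)\,\bar\kappa = R(0), && \bar{R}'(0)\,\bar\kappa = -R'(0),
\end{align*}
in the single unknown $\bar\kappa$. Viewed as an overdetermined linear system, it is consistent if and only if the $2\times 2$ determinant $R(0)\bar{R}'(0)+\bar{R}(0)R'(0)$ vanishes, which is exactly $\wp[R](0) = (\bar{R}R'+R\bar{R}')(0)$, \emph{i.e.} the hypothesis~\eqref{eq:BCstat}. If $R\not\equiv 0$, then at least one of $R(0),R'(0)$ is nonzero (by ODE uniqueness at the ordinary point $r^\star=0$), so one of the two equations uniquely fixes $\bar\kappa$, and a direct inspection gives $|\kappa|=1$, in particular $\kappa\in\mathbb{C}^\ast$ as required. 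The degenerate case $R\equiv 0$ is trivial (take any $\kappa\neq 0$).

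There is no substantive obstacle: the content of the lemma is precisely that the real-valued ``target'' quantity $\wp[R](0)$ introduced in Section~\ref{sec:reduction} coincides algebraically with the compatibility condition for matching the two spin weights at infinity. This coincidence is the whole reason why $\wp[R](0)=0$ is the right shooting target, and the present lemma is the book-keeping that records this fact.
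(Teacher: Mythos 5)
Your proposal is correct and follows essentially the same route as the paper: the same ansatz $R^{[\pm2]}=(1,\kappa)R$, the same horizon-regularity observation from $R=\De F$, and the same determination of $\kappa$ from the two conditions~\eqref{eq:TeukBCradial} at $r^\star=0$, with~\eqref{eq:BCstat} serving as the compatibility condition (the paper verifies this by an explicit case split on whether $R(0)$ vanishes, whereas you package it as the vanishing of the determinant of an overdetermined linear system — a tidy reformulation of the same computation, including the Cauchy--Lipschitz argument that $(R(0),R'(0))\neq(0,0)$ when $R\not\equiv0$).
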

\begin{proof}
  Let $\kappa\in\mathbb{C}^\ast$ a constant which will be determined in the sequel and define $R^{[+2]} := R$ and $R^{[-2]}:=\kappa R$. Since $R$ is regular at the horizon in the sense of~\eqref{eq:defreghorradial}, $R^{[-2]}$ is regular in the sense of~\eqref{eq:defreghorradialoriginal} and $R^{[+2]}$ is (even more) regular than required in~\eqref{eq:defreghorradialoriginal}. Now we show that if~\eqref{eq:BCstat} holds, one can recover the boundary condition~\eqref{eq:TeukBCradial} by an appropriate choice of $\kappa$. If $R=0$ there is nothing to do and one can take any $\kappa\in\mathbb{C}^\ast$. If else and if $R(0)=0$, one can simply define $\kappa := -\frac{\bar{R}'(0)}{R'(0)}$ and the conditions~\eqref{eq:TeukBCradial} are satisfied (note that by Cauchy-Lipschitz one cannot have $R'(0)=0$ because $R\neq0$). If else, one defines $\kappa := \frac{\bar{R}(0)}{R(0)}$ so that $R^{[+2]}-\overline{R^{[-2]}} = R - \bar\kappa \bar R = 0$ at $r^\star=0$, which is the first boundary condition of~\eqref{eq:TeukBCradial}. Then, one checks that~\eqref{eq:BCstat} gives
  \begin{align*}
    \left(R^{[+2]}\right)' + \left(\overline{R^{[-2]}}\right)' & = \frac{1}{\bar{R}}\left(\bar{R}R'+ R\bar R'\right) + \left(\bar\kappa-\frac{R}{\bar{R}}\right) \bar R' = 0
  \end{align*}
  at $r^\star=0$, and the second boundary condition of~\eqref{eq:TeukBCradial} is satisfied, as desired.
\end{proof}


We have the following reciproque (see~\cite{Gra.Hol23}).
\begin{lemma}\label{lem:reciproque}
  For all solutions $(R^{[+2]},R^{[-2]})$ of the Teukolsky equations~\eqref{eq:radialstatTeuk} which are regular at the horizon~\eqref{eq:defreghorradialoriginal} and satisfy the coupled boundary condition~\eqref{eq:TeukBCradial}, then $R:=R^{[+2]}$ is regular at the horizon~\eqref{eq:defreghorradial} and satisfies the ``target'' boundary condition~\eqref{eq:BCstat}.
\end{lemma}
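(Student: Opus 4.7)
The strategy is to first establish that $R^{[+2]}$ must be proportional to $R^{[-2]}$; once this is done, both conclusions follow by direct manipulation. Writing $R^{[+2]} = \alpha R^{[-2]}$ for some $\alpha \in \mathbb{C}$, the regularity~\eqref{eq:defreghorradial} is immediate since $R^{[+2]} = \alpha\De F^{[-2]} = \De(\alpha F^{[-2]})$. For the target boundary condition~\eqref{eq:BCstat}, substituting $R^{[+2]} = \alpha R^{[-2]}$ into~\eqref{eq:TeukBCradial} yields
\begin{align*}
  \alpha R^{[-2]}(0) & = \overline{R^{[-2]}(0)}, & \alpha (R^{[-2]})'(0) & = -\overline{(R^{[-2]})'(0)}.
\end{align*}
When $R^{[-2]}(0) \neq 0$, the first relation gives $|\alpha|=1$; combining both yields $\overline{R^{[-2]}(0)}(R^{[-2]})'(0) + R^{[-2]}(0)\overline{(R^{[-2]})'(0)} = 0$, \emph{i.e.} $(|R^{[-2]}|^2)'(0) = 0$, and therefore $(|R^{[+2]}|^2)'(0) = |\alpha|^2(|R^{[-2]}|^2)'(0) = 0$, which is~\eqref{eq:BCstat}. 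The degenerate cases ($R^{[-2]} \equiv 0$, or $R^{[-2]}(0) = 0$ with $(R^{[-2]})'(0) \neq 0$) are handled similarly: in the former, the coupled boundary conditions combined with Cauchy-Lipschitz at the regular point $r^\star = 0$ force $R^{[+2]} \equiv 0$ and all conclusions become trivial.

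The nontrivial step is the proportionality $R^{[+2]} \propto R^{[-2]}$. Both solve the same ODE~\eqref{eq:radialstatTeuk}, so the solution space is two-dimensional, and the hypothesis $R^{[-2]} = \De F^{[-2]}$ places $R^{[-2]}$ in the one-dimensional strongly-regular subspace of solutions vanishing like $\De$ at $r = r_+$. In the natural variable $x := e^{2\kappa_+ r^\star}$ at this regular singular point (with $\kappa_+ := \pr_r\De|_{r_+}/(2(r_+^2+a^2))$ the surface gravity), the ODE has indicial exponents $\pm 1$ differing by the integer $2$, which is precisely the situation in which the linearly-independent Frobenius solution generically contains a $\log(r - r_+)$ term. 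A direct calculation gives its coefficient proportional to $u_2 - u_1^2$, where $u_j$ are the first two $x$-expansion coefficients of $V[\la]/(2\kappa_+)^2 - 1$, and this coefficient has nonzero imaginary part since the $-iV_1$ piece of the potential contributes to $\Im u_1$ a term proportional to $\pr_r V_1|_{r_+} \neq 0$ when $m \neq 0$. Hence a genuine $\log(r - r_+)$ term appears, which is incompatible with the smoothness in $r$ of $F^{[+2]} = \De R^{[+2]}$; this forces $R^{[+2]}$ into the same strongly-regular subspace as $R^{[-2]}$, giving the proportionality.

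The main obstacle is the Frobenius/log-coefficient computation of the second paragraph: one must verify that $\Im(u_2 - u_1^2) \neq 0$ for all admissible Kerr-adS parameters at stationary frequency $\omega = m\omega_+$ with $m \neq 0$. Equivalently, one can formulate the argument via the conserved Wronskian $W := R^{[+2]}(R^{[-2]})' - (R^{[+2]})' R^{[-2]}$: an asymptotic analysis at $r = r_+$ using $R^{[-2]} = \De F^{[-2]}$ and $R^{[+2]} = F^{[+2]}/\De$ gives $W = 4\kappa_+ F^{[+2]}(r_+) F^{[-2]}(r_+)$, while evaluation at $r^\star = 0$ using~\eqref{eq:TeukBCradial} gives $W = (|R^{[-2]}|^2)'(0) \in \mathbb{R}$; the Frobenius analysis then forces $W = 0$, hence $F^{[+2]}(r_+) = 0$, and the desired proportionality follows.
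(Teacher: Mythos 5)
Your overall strategy is the same as the paper's: reduce both conclusions to the colinearity of $R^{[+2]}$ and $R^{[-2]}$, obtain that colinearity from a Frobenius analysis at the regular singular point $r=r_+$ (indicial roots $\pm1$, so the $\De^{-1}$-branch generically carries a $\De\log\De$ correction which is incompatible with the smoothness of $\De R^{[+2]}$ demanded by~\eqref{eq:defreghorradialoriginal}), and then read off~\eqref{eq:BCstat} from the vanishing of the Wronskian. Your first paragraph and your Wronskian reformulation are correct, and your identification of the logarithm coefficient as proportional to $u_2-u_1^2$ (the obstruction in the $\rho=-1$ recursion at the resonant order) is also correct.

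The gap sits exactly at the step that carries all the difficulty: showing this coefficient is nonzero. Your argument that it has nonzero imaginary part because the $-iV_1$ term makes $\Im u_1\neq 0$ is a non-sequitur: $\Im(u_2-u_1^2)=\Im u_2-2\,\Re u_1\,\Im u_1$, and $\Im u_2$ receives its own contribution from $V_1$ while $\Re u_1$ is generically nonzero (from $V_0$ and $V_{00}$), so nothing prevents a cancellation without an explicit, sign-definite evaluation, which you do not carry out and which you yourself flag as ``the main obstacle''. Moreover the proposed mechanism is vacuous precisely when the potential is real ($m=0$, or $a=0$, in which case $V_1\equiv 0$), whereas the lemma is stated for all admissible parameters. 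The paper does not attempt a direct Taylor-coefficient computation at the horizon: it obtains the nonvanishing of the log coefficient $C$ for all admissible sub-extremal Kerr-adS parameters from the Teukolsky--Starobinsky identities and the positivity of the Teukolsky--Starobinsky constants (see the erratum of~\cite{Gra.Hol23}). Without that structural input, or a completed verification that $u_2-u_1^2\neq0$, your proof does not close.
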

\begin{proof}
  Denoting by $\mathfrak{W}:=R^{[+2]}\le(R^{[-2]}\ri)'-R^{[-2]}\le(R^{[+2]}\ri)'$ the Wronskian between $R^{[+2]}$ and $R^{[-2]}$ (which are both solutions to the same ODE~\eqref{eq:radialstatTeuk}), the coupled boundary conditions~\eqref{eq:TeukBCradial} imply that $\mathfrak{W} = -\left(\bar{R}R' + R\bar{R}'\right)$ at $r^\star=0$, with $R:=R^{[+2]}$. Thus, the ``target'' boundary condition~\eqref{eq:BCstat} is satisfied provided that $R^{[+2]}$ and $R^{[-2]}$ are colinear. A Frobenius analysis of the ODE~\eqref{eq:radialstatTeuk} at the regular singular point $r=r_+$ shows that~\eqref{eq:radialstatTeuk} has indicial roots $+1$ and $-1$, hence has two branches of solutions
  \begin{align*}
    R_- & = \De F_-, & R_{+} & = \De^{-1}F_+ + C \De \log(\De) F_-,
  \end{align*}
  with $F_+,F_-:[r_+,+\infty)\to\mathbb{C}$ two smooth functions normalised by $F_{+}(r_+)=F_-(r_+)=1$ and where $C$ is a constant depending of the parameters. Using the Teukolsky-Starobinsky identities and positivity of the Teukolsky-Starobinsky constants, one can show that $C\neq0$ for all admissible sub-extremal Kerr-adS parameters, see~\cite[erratum]{Gra.Hol23}. $R^{[-2]}$ being regular at the horizon in the sense of~\eqref{eq:defreghorradialoriginal} implies that $R^{[-2]}$ is colinear to $R_-$. Since $C\neq0$, the branch $R_{+}$ does not satisfy the regularity required for $R^{[+2]}$ because $\De R_+ = F_+ + C\De^2 \log\De F_- \notin C^2([r_+,+\infty))$. Thus, $R^{[+2]}$ must be colinear to $R_-$ hence to $R^{[-2]}$ and this finishes the proof of the lemma. 
\end{proof}

In view of Lemmas~\ref{lem:target},~\ref{lem:reciproque}, we define the following quantities:
\begin{align*}
  \wp[R] & := 2 \Re(R\bar R') = R\bar R' + \bar R R' = \left(|R|^2\right)' \in\mathbb{R}, & W[R] & := 2i\Im(R\bar R') = R\bar R' - R'\bar R \in i\mathbb{R}.
\end{align*}
Direct computations give the following lemma.
\begin{lemma}
  For all solution $R$ to the radial Teukolsky ODE~\eqref{eq:radialstatTeuk}, we have
  \begin{subequations}\label{eq:envelopewronskien}
    \begin{align}
      \wp' & = 2 |R'|^2 + 2 \Re(V) |R|^2, \label{eq:wpderiv1}\\
      \wp'' & = 2i\Im(V) W + 2\Re(V')|R|^2 + 4\Re(V) \wp, \label{eq:wpderiv2}
    \end{align}
    and
    \begin{align}\label{eq:wronskien}
      W' & = -2i\Im(V)|R|^2 = 2iV_1 |R|^2.
    \end{align}
  \end{subequations}
\end{lemma}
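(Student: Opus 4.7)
The three identities are direct algebraic consequences of the ODE $R'' = V[\lambda] R$, and the plan is simply to differentiate and substitute. Throughout, I would use the defining decompositions
\begin{align*}
  R\bar R' & = \tfrac{1}{2}(\wp+W), & \bar R R' & = \tfrac{1}{2}(\wp-W),
\end{align*}
together with $\Im(V)=-V_1$ (which is immediate from $V=V_0[\lambda]+V_{00}-iV_1$ with $V_0[\lambda],V_{00},V_1$ real).

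For the first identity, I would differentiate $\wp=R\bar R'+\bar R R'$ and use $R''=VR$, $\bar R''=\bar V\bar R$ to get
\begin{align*}
  \wp' = 2|R'|^2 + R\bar V\bar R + \bar R V R = 2|R'|^2 + (V+\bar V)|R|^2,
\end{align*}
which is exactly \eqref{eq:wpderiv1}. For the third identity, the same differentiation applied to $W=R\bar R'-R'\bar R$ kills the $|R'|^2$ terms and leaves
\begin{align*}
  W' = R\bar V\bar R - VR\bar R = (\bar V-V)|R|^2 = -2i\Im(V)|R|^2 = 2iV_1|R|^2,
\end{align*}
giving \eqref{eq:wronskien}.

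For the second identity, I would differentiate the formula~\eqref{eq:wpderiv1} for $\wp'$ and then use the ODE again:
\begin{align*}
  \wp'' = 2(R''\bar R' + R'\bar R'') + 2\Re(V')|R|^2 + 2\Re(V)\wp = 2\bigl(VR\bar R' + \bar V\bar R R'\bigr) + 2\Re(V')|R|^2 + 2\Re(V)\wp.
\end{align*}
The key small observation is then that, using the decompositions above,
\begin{align*}
  VR\bar R' + \bar V\bar R R' = \tfrac{V}{2}(\wp+W) + \tfrac{\bar V}{2}(\wp-W) = \Re(V)\wp + i\Im(V)W,
\end{align*}
so that $\wp'' = 4\Re(V)\wp + 2i\Im(V)W + 2\Re(V')|R|^2$, which is \eqref{eq:wpderiv2}.

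There is no real obstacle here; the only point worth being careful about is the bookkeeping of the two cross terms $R\bar R'$ and $\bar R R'$ in the second derivative and recognising that their combination with $V$ and $\bar V$ splits cleanly into a $\Re(V)\wp$ piece and an $i\Im(V)W$ piece. This clean split is precisely what makes $\wp$ a useful real-valued ``bullet'' in the subsequent shooting argument, since the imaginary part of $V$ only couples to $\wp$ through $W$, and the coupling is controlled by the first-order quantity $V_1$.
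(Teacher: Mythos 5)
Your computation is correct and is exactly the ``direct computation'' that the paper invokes without writing out: differentiate $\wp$ and $W$, substitute $R''=VR$, and for $\wp''$ use the split $R\bar R'=\tfrac12(\wp+W)$, $\bar RR'=\tfrac12(\wp-W)$ to produce the $\Re(V)\wp$ and $i\Im(V)W$ terms. Nothing further is needed.
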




\section{The angular eigenvalues and the Hawking-Reall threshold}\label{sec:angular}
This section is dedicated to the proof of the following (uniform) limit for the angular eigenvalues.
\begin{lemma}\label{lem:angular}
  For all compact subset of admissible Kerr-adS parameters $K$, we have
  \begin{align}\label{lim:lam}
    \frac{\la_{m|m|}}{m^2} \xrightarrow{|m|\to+\infty} \Xi^2\le(1-a\om_+\ri)^2 = \Xi^2 \le(\frac{\om_+}{k}\ri)^2 + \Xi^2\frac{kr_+^2-a}{k(r_+^2+a^2)} \frac{kr_+^2+a}{k(r_+^2+a^2)}, 
  \end{align}
  for all $(M,a,k)$ in $K$ and the convergence is uniform on $K$.
\end{lemma}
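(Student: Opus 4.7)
The plan is to prove Lemma~\ref{lem:angular} by a semiclassical Rayleigh quotient argument based on the minimization property~\eqref{eq:minimisation}. First, I would verify the algebraic identity on the right of~\eqref{lim:lam}: from $\om_+ = a/(r_+^2+a^2)$ one gets $1 - a\om_+ = r_+^2/(r_+^2+a^2)$, and~\eqref{lim:lam} reduces to a direct simplification using $\Xi + a^2k^2 = 1$. Next, I would set $f(\varth) := \Xi^2(a\om_+\sin\varth - 1/\sin\varth)^2/\De_\varth$, so that the leading part of $G_m$ reads $\Xi^2 H^2/\De_\varth = m^2 f(\varth)$, and show that $f$ attains its strict minimum on $(0,\pi)$ uniquely at $\varth = \pi/2$ with value $\Xi^2(1-a\om_+)^2$. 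The substitution $u = \sin^2\varth$ reduces this to showing that $g(u) := (1-a\om_+ u)^2/(u\Xi + a^2k^2 u^2)$ is strictly decreasing on $(0,1]$, which follows from a short derivative computation (the numerator of $g'(u)$ has constant negative sign for $a\om_+ < 1$), with $g(1) = (1-a\om_+)^2$.

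For the \emph{upper bound} on $\la_{m|m|}$, I would insert in the Rayleigh quotient~\eqref{eq:minimisation} a test function concentrated at $\varth = \pi/2$ at scale $m^{-1/2}$, of the form $S_m(\varth) = c_m m^{1/4}\phi(m^{1/2}(\varth - \pi/2))$ with $\phi \in C^\infty_c(\mathbb{R})$, $\|\phi\|_{L^2}=1$, and $c_m \to 1$ chosen so that $\int|S_m|^2\sin\varth\,\d\varth = 1$. The kinetic term is then $O(m)$ (each $\pr_\varth$ contributes $m^{1/2}$). Using the nondegeneracy of the minimum, $f(\varth) - f(\pi/2) = O((\varth - \pi/2)^2) = O(m^{-1})$ on the support, so $\int m^2 f(\varth)|S_m|^2\sin\varth\,\d\varth = m^2 f(\pi/2) + O(m)$; the subleading terms of $G_m$ (all of size $O(m)$ uniformly since $\De_\varth \geq \Xi > 0$) also contribute $O(m)$. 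This gives $\la_{m|m|} \leq m^2 f(\pi/2) + O(m)$.

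For the matching \emph{lower bound}, the key algebraic observation is the perfect-square identity $H^2 - 4H\cot\varth + 4\cot^2\varth = (H - 2\cot\varth)^2$, which allows one to recombine three terms of $G_m$ and write
\begin{align*}
G_m = \frac{\Xi^2(H - 2\cot\varth)^2}{\De_\varth} + 2\De_\varth + 2a^2k^2 + \frac{8a^2k^2\Xi m\cos\varth}{\De_\varth} + 8\Xi^2 a m\om_+ \cos\varth.
\end{align*}
The last four terms are bounded by $Cm$ uniformly in $\varth$. Setting $F_m(\varth) := \Xi^2(u(\varth) - 2\cot\varth/m)^2/\De_\varth$ with $u = a\om_+\sin\varth - 1/\sin\varth$, so $\Xi^2(H - 2\cot\varth)^2/\De_\varth = m^2 F_m(\varth)$, I would verify $\min_{\varth \in (0,\pi)} F_m \geq f(\pi/2) - C/m^2$ by splitting into three regions: near the poles $\varth \to 0,\pi$, $F_m$ blows up like $1/\varth^2$ and trivially dominates $f(\pi/2)$; on a compact subinterval bounded away from $\pi/2$, $F_m = f + O(1/m)$ and $f$ is bounded below by $f(\pi/2)$ plus a strictly positive constant; finally near $\pi/2$, a Taylor expansion around the nondegenerate critical point of $f$ shows that the $O(1/m)$ linear-in-$s$ perturbation introduced by $-2\cot\varth/m$ shifts the argmin by $O(1/m)$ and depresses the minimum value by only $O(1/m^2)$. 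Altogether $G_m \geq m^2 f(\pi/2) - Cm$ pointwise, so $\la_{m|m|} \geq m^2 f(\pi/2) - Cm$ by~\eqref{eq:minimisation} (the kinetic term being nonnegative).

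Uniformity on a compact admissible parameter set $K$ is automatic because all constants depend continuously on $(M,a,k)$ with $ak$ bounded away from $1$, so $\Xi$, $\om_+$, $r_+$, and the second derivative $f''(\pi/2) = -2g'(1) > 0$ all admit uniform positive bounds on $K$. I expect the most delicate step to be the lower-bound analysis of $\min F_m$ near $\varth = \pi/2$: one must carefully combine the failure of $\pi/2$ to be a critical point of $F_m$ (the linear term of size $O(1/m)$) with the quadratic behavior of $f$ at its minimum, so as to ensure that the loss in $F_m$ is only $O(1/m^2)$ and does not contaminate the leading $m^2 f(\pi/2)$ asymptotic.
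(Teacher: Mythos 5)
Your proof is correct and follows essentially the same strategy as the paper: an upper bound from plugging a test function concentrated at $\varth=\pi/2$ on the scale $|m|^{-1/2}$ into the Rayleigh quotient~\eqref{eq:minimisation}, and a lower bound from a pointwise estimate $G_m/m^2 \geq \Xi^2(1-a\om_+)^2 - o(1)$ combined with~\eqref{eq:minimisation}. The only difference is in the lower bound, where the paper sidesteps your perfect-square recombination and the three-region analysis of the shifted minimum by instead multiplying the error $G_m/m^2 - \Xi^2H^2/(\De_\varth m^2)$ by $\De_\varth\sin^2\varth/\Xi^2$ (which renders it uniformly $o(1)$, including at the poles) and then using the cruder but sufficient pointwise inequalities $(1-a\om_+\sin^2\varth)^2\geq(1-a\om_+)^2$ and $\De_\varth\sin^2\varth\leq 1$.
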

\begin{proof}
  Inspecting the potential $G_m$ defined in~\eqref{eq:defanglabis}, we have
  \begin{align}\label{eq:CVUG}
    \widetilde{G}_m(\varth) & := \frac{\De_\varth\sin^2(\varth)}{\Xi^2} \le(\frac{G_m(\varth)}{m^2} - \frac{\Xi^2H^2}{\De_\varth m^2}\ri) \xrightarrow{|m|\to+\infty} 0, 
  \end{align}
  uniformly for all $\varth\in(0,\pi)$ and for all $(M,a,k)\in K$. Hence, for all $0<\de<\min_{K}(1-a\om_+)^2$ (for all admissible parameters, one has $0\leq a\om_+<1$), there exists $m_0=m_0(K)>0$ such that
  \begin{align*}
    \begin{aligned}
      \frac{G_m(\varth)}{m^2} & = \frac{\Xi^2}{\De_\varth \sin^2\varth}\le(1-a\om_+\sin^2\varth\ri)^2 + \frac{\Xi^2}{\De_\varth \sin^2\varth} \widetilde{G}_m(\varth) \\
      & \geq \frac{\Xi^2}{\De_\varth\sin^2\varth}\le(\le(1-a\om_+\ri)^2 - \de\ri) \\
      & \geq \Xi^2 \le(\le(1-a\om_+\ri)^2 - \de\ri), 
    \end{aligned}
  \end{align*}
  for all $|m|\geq m_0$ for all $\varth\in(0,\pi)$ and for all $(M,a,k)\in K$. Using the variational property~\eqref{eq:minimisation}, we obtain that
  \begin{align}\label{est:lowerboundlaproof}
    \frac{\la_{m|m|}}{m^2} & \geq \Xi^2 \le(\le(1-a\om_+\ri)^2 - \de\ri),
  \end{align}
  for all $|m|\geq m_0$ and for all $(M,a,k)\in K$.\\

  Let us now prove an upper bound for $\la_{m|m|}$. Let $h>0$ which will be determined \emph{a posteriori}. Take $S$ to be the ``hat'' function which vanishes on $[0,\pi/2-h]$ and $[\pi/2+h,\pi]$ and such that $\pr_\varth S = 1$ on $[\pi/2-h,\pi/2]$ and $\pr_\varth S = -1$ on $[\pi/2,\pi/2+h]$. Plugging $S$ in~\eqref{eq:minimisation}, we have
  \begin{align}\label{eq:semiclassicalangular}
    \begin{aligned}
    \la_{m|m|}\int_0^\pi|S|^2\sin\varth\d\varth & \leq \int_0^\pi\De_\varth |\pr_\varth S|^2 \sin\varth\d\varth + \int_{0}^\pi G_m(\varth)|S|^2\sin\varth\d\varth \\
                                                 & \leq 2h + \le(G_m(\pi/2) + \sup_{\varth\in(\pi/2-h,\pi/2+h)}\le(G_m(\varth)-G_m(\pi/2)\ri) \ri)\int_0^\pi|S|^2\sin\varth\d\varth.
    \end{aligned}
  \end{align}
  We have
  \begin{align}\label{est:intS2semiclass}
    \begin{aligned}
      \int_0^\pi|S|^2\sin\varth\d\varth & \geq \frac23 \sin\le(\frac{\pi}2-h\ri) h^3,
    \end{aligned}
  \end{align}
  and, by a direct inspection of $G$ from~\eqref{eq:defanglabis},
  \begin{align}\label{est:GthetaG0semiclass}
    \begin{aligned}
      \sup_{(M,a,k)\in K}\sup_{\varth\in(\pi/2-h,\pi/2+h)}\le(G_m(\varth)-G_m(\pi/2)\ri) 
                     & \leq Ch m^2, 
    \end{aligned}
  \end{align}
  for $|m|\geq 2$, provided that $h>0$ is sufficiently small depending on numerical constants, and where $C=C(K)>0$. Plugging~\eqref{est:intS2semiclass} and~\eqref{est:GthetaG0semiclass} in~\eqref{eq:semiclassicalangular}, we have
  \begin{align}\label{eq:laupperproof}
    \begin{aligned}
      \frac{\la_{m|m|}}{m^2} & \leq \frac{3}{m^2h^2\sin\le(\frac{\pi}2-h\ri)} + \le(\frac{G_m(\pi/2)}{m^2} + Ch \ri).
    \end{aligned}
  \end{align}
  Choosing now $h=\frac{1}{\sqrt{|m|}}$ in~\eqref{eq:laupperproof}, we combine~\eqref{est:lowerboundlaproof} and~\eqref{eq:laupperproof} as
  \begin{align}\label{eq:encadrementproof}
    \begin{aligned}
      \Xi^2 \le(\le(1-a\om_+\ri)^2 - \de\ri) \leq & \frac{\la_{m|m|}}{m^2} \leq \frac{3}{m\sin\le(\frac{\pi}2-\frac1{\sqrt{|m|}}\ri)} + \le(\frac{G_m(\pi/2)}{m^2} + \frac{C}{\sqrt{|m|}} \ri).
    \end{aligned}
  \end{align}
  Using that
  \begin{align*}
    \frac{G_m(\pi/2)}{m^2} & = \Xi^2\le(1-a\om_+\ri)^2 + \frac{2+2a^2k^2}{m^2} \xrightarrow{|m|\to+\infty} \Xi^2\le(1-a\om_+\ri)^2,
  \end{align*}
  uniformly for $(M,a,k)\in K$, we deduce from~\eqref{eq:encadrementproof} that there exists $m_0'=m_0'(K)>0$ such that
  \begin{align*}
    \le|\frac{\la_{m|m|}}{m^2} - \Xi^2\le(1-a\om_+\ri)^2 \ri| \leq \Xi^2\de,
  \end{align*}
  for all $|m|\geq m_0'$ and all $(M,a,k)\in K$, and this finishes the proof of the lemma.
\end{proof}

\section{Uniform no-rebound property}\label{sec:norebound}
This section is dedicated to the proof of the following ``no-rebound'' property.
\begin{proposition}\label{prop:concavityargument}
  Let $K$ be a compact subset of strictly rotating ($a>0$) admissible Kerr-adS parameters. There exists $\varep_0>0$ and $m_0\in\mathbb{N}$ depending on $K$ such that for all parameters $(M,a,k)$ in $K$, if $a-kr_+^2 \leq \varep_0k(r_+^2+a^2)$ and $|m|\geq m_0$, the following holds. Let $R$ be a solution of the radial Teukolsky equation~\eqref{eq:radialstatTeuk} with $\la=\la_{m\ell}$ which is regular at the horizon~\eqref{eq:defreghorradial}. For all $r^\star_0\in(-\infty,0)$, if $\wp[R](r^\star_0) = \wp[R]'(r^\star_0) = 0$, then $R=0$ on $(-\infty,0)$ or $\wp[R]''(r^\star_0)<0$.
\end{proposition}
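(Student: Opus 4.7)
The plan is to use the two conditions $\wp[R](r^\star_0)=\wp[R]'(r^\star_0)=0$ to derive a closed expression for $\wp[R]''(r^\star_0)$, and then to verify that this expression is strictly negative in the near-threshold high-frequency regime.

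First, if $R(r^\star_0)=0$, then \eqref{eq:wpderiv1} at $r^\star_0$ reduces to $|R'(r^\star_0)|^2=0$, so $R'(r^\star_0)=0$ and Cauchy-Lipschitz uniqueness for the linear ODE~\eqref{eq:radialstatTeuk} gives $R\equiv 0$ on $(-\infty,0)$. Otherwise $R(r^\star_0)\neq 0$, and writing $R'(r^\star_0)=(A+iB)R(r^\star_0)$ with $A,B\in\mathbb{R}$, the conditions $\wp[R](r^\star_0)=2A|R(r^\star_0)|^2=0$ and $\wp[R]'(r^\star_0)=2(B^2+\Re V(r^\star_0))|R(r^\star_0)|^2=0$ give $A=0$ and $B^2=-\Re V(r^\star_0)\geq 0$; in particular $W(r^\star_0)=-2iB|R(r^\star_0)|^2$. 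The horizon regularity $R=\De F$ of~\eqref{eq:defreghorradial} yields $R\bar R'\to 0$ as $r^\star\to-\infty$, hence $W(-\infty)=0$; integrating~\eqref{eq:wronskien}, $B|R(r^\star_0)|^2=-\int_{-\infty}^{r^\star_0}V_1|R|^2\,\d s$. Substituting into~\eqref{eq:wpderiv2} with $\Im V=-V_1$ and $\wp[R](r^\star_0)=0$ produces
\begin{align*}
  \wp[R]''(r^\star_0) & = 2|R(r^\star_0)|^2\bigl(\Re V'(r^\star_0) - 2V_1(r^\star_0)B\bigr) \\
                     & \leq 2|R(r^\star_0)|^2\bigl(\Re V'(r^\star_0) + 2|V_1(r^\star_0)|\sqrt{-\Re V(r^\star_0)}\bigr),
\end{align*}
so it suffices to establish the pointwise inequality $\Re V'(r^\star) + 2|V_1(r^\star)|\sqrt{-\Re V(r^\star)}<0$ at every $r^\star\in(-\infty,0)$ with $\Re V(r^\star)\leq 0$, uniformly on the compact parameter set and for $|m|$ large.

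Verifying this pointwise inequality is the main technical step. Using Lemma~\ref{lem:angular} and the monotonicity $\la_{m\ell}\geq\la_{m|m|}$, one reduces to the fundamental case, where $\lat=O(\varep_0 m^2)$ in the near-threshold regime. The $O(m^2)$ part of $\Re V$ then takes the form $\lat\,g+(\Xi\om_+m/k)^2 f$ with $g=\De/(r^2+a^2)^2$ and $f=[\De-k^2(r^2-r_+^2)^2]/(r^2+a^2)^2\geq 0$. Since $f/g$ decreases from $1$ at $r=r_+$ to $O(1/r^2)$ at infinity, the set $\{\Re V\leq 0\}$ is empty on the subcritical side of the threshold (where the statement is then vacuous) and, on the supercritical side, localises in the region $|r^\star|\lesssim\sqrt{\varep_0}$. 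In this region, a direct expansion of $V$, $V'$, $V_1$ from Section~\ref{sec:prelim} gives $-\Re V=O(m^2\varep_0)$, $|V_1|=O(m\sqrt{\varep_0})$, and $\Re V'$ dominated by the strictly negative contribution $(\Xi\om_+m/k)^2 f'$ of order $-m^2\sqrt{\varep_0}$, so that
\begin{align*}
  \Re V' + 2|V_1|\sqrt{-\Re V} & \leq -c m^2\sqrt{\varep_0} + C m^2\varep_0 < 0
\end{align*}
for $\varep_0$ small enough and $|m|$ large enough, both depending only on $K$.

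The main obstacle I anticipate is ensuring that all these asymptotic expansions are \emph{uniform} on the full compact set $K$ and across the Hawking-Reall threshold; in particular one must track the lower bound on $|\Re V'|$ as the threshold is crossed and control the error terms in powers of $1/r$, $\varep_0$ and $1/m$ simultaneously. This is essentially careful bookkeeping, but involves several interacting small parameters and is the step where uniformity could \emph{a priori} be lost.
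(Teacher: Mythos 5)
Your reduction is exactly the paper's: the dichotomy on $\Re V(r^\star_0)$, the identification $B^2=-\Re V(r^\star_0)$ from $\wp'=0$, the determination of $W(r^\star_0)$ by integrating the Wronskian identity from the horizon, and the resulting formula $\wp''(r^\star_0)=2|R|^2\left(\Re V'-2V_1B\right)\leq 2|R|^2\left(\Re V'+2|V_1|\sqrt{-\Re V}\right)$ all match the paper (which writes this as $\wp''=(f+2V_{00}')|R|^2$ with $f:=4|V_1|\sqrt{|V_0+V_{00}|}+2V_0'$). This part is correct and complete.

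The gap is in the final displayed chain of inequalities for the pointwise potential estimate. On the set $\NN=\{\Re V\leq 0\}$ the radius $r$ ranges over an interval of the form $[r_0,+\infty)$ with $r_0\gtrsim \varep_0^{-1/2}$ (indeed $V_0[\la_{m|m|}](0)+V_{00}(0)<0$ precisely when $\lat_{m|m|}<0$, so $\NN$ always reaches $r=+\infty$), and the dominant negative contribution to $V_0'$ is
\begin{align*}
  -(\Xi\om_+m)^2\,2\,\frac{\De}{r^2+a^2}\,\frac{r^2-r_+^2}{r^2+a^2}\,\frac{2r(r_+^2+a^2)}{(r^2+a^2)^2} \leq -\frac{(\Xi\om_+m)^2}{r}\DM,
\end{align*}
which tends to $0$ as $r\to+\infty$ (equivalently $r^\star\to 0^-$). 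Hence your claimed uniform lower bound $|\Re V'|\geq c\,m^2\sqrt{\varep_0}$ is false near $r^\star=0$, and the displayed conclusion ``$\leq -cm^2\sqrt{\varep_0}+Cm^2\varep_0$'' does not follow. The repair — which is what the paper's Lemma~\ref{lem:cruxfunction} does — is to keep the $1/r$ weight on \emph{both} competing terms: Lemma~\ref{lem:posV1} gives $|V_1|\les \frac{|m|}{r}$ and Lemma~\ref{lem:boundonV0} gives $\sqrt{-V_0-V_{00}}\les |m|\sqrt{\varept}\,\frac{\sqrt{\De}}{r^2+a^2}$ with $\frac{\sqrt\De}{r^2+a^2}$ bounded, so that $4|V_1|\sqrt{-\Re V}\les \frac{m^2\sqrt{\varept}}{r}$ is compared to $2V_0'\leq -\frac{2(\Xi\om_+m)^2}{r}\DM$ at the \emph{same} weight, the ratio being $O(\sqrt{\varep_0})$ uniformly on $\NN$. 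Two further points are needed and absent from your sketch: (i) the contribution $(\la_{m\ell}-2-a^2k^2)w'$ to $V_0'$ must be shown to have a favourable sign on $\NN$ (the paper proves $\NN\subset[3M/\Xi,+\infty)$, where $w'<0$, via a convexity argument on the quartic $Z$; your monotonicity claim for $h/g$ is unproven and unnecessary); and (ii) the $O(1)$ term $2V_{00}'$ is not absorbed by an $O(m^2/r)$ gain when $r\to+\infty$, so one must invoke $\lim_{r\to+\infty}V_{00}'<0$ (Lemma~\ref{lem:posV00}) to conclude that $r\mapsto -\frac{(\Xi\om_+m)^2}{r}\DM+2V_{00}'(r)$ is uniformly negative for $|m|$ large. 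You correctly anticipated that uniformity near $r^\star=0$ is the danger point, but the argument as written breaks exactly there.
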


\subsection{Preliminary estimates for the potential $V$}
First, we extract from~\cite{Gra.Hol23} the following lemma for $V_{00}$.
\begin{lemma}\label{lem:posV00}
  We have $V_{00}(r)>0$ for all $r>r_+$, $\lim_{r\to+\infty}V_{00}(r) = 0$, and $\lim_{r\to+\infty}V'_{00}(r)<0$.
\end{lemma}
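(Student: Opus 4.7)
\textbf{Proof plan for Lemma~\ref{lem:posV00}.} The plan is to treat the three claims separately: the positivity is an algebraic identity which I would simply quote from~\cite{Gra.Hol23}, while the two limits reduce to an elementary Laurent expansion of $V_{00}$ at $r=+\infty$.

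The positivity $V_{00}(r)>0$ on $(r_+,+\infty)$ is not obvious from the definition. A short direct computation gives $\pr_r^2\De = 12k^2 r^2 + 2(1+k^2 a^2)$, so the second summand in the definition of $V_{00}$ equals $-\frac{\De\, k^2(18r^2+a^2)}{(r^2+a^2)^2} < 0$ on $(r_+,+\infty)$, and the fourth summand is also negative for $r^2 > a^2/2$; positivity therefore emerges only from a delicate cancellation between the four summands. The proof in~\cite{Gra.Hol23} identifies $V_{00}$ with the manifestly non-negative Regge--Wheeler-type potential produced by the Chandrasekhar transformation of the radial Teukolsky equation, and I would not attempt to reprove it here, instead invoking the corresponding statement directly.

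For the two limits, I would expand each of the four summands of $V_{00}$ in powers of $1/r$, starting from the rewriting $\De = k^2(r^2+a^2)^2 + \Xi(r^2+a^2) - 2Mr$ (with $\Xi=1-a^2k^2$), which yields the useful identities $\De/(r^2+a^2)^2 = k^2 + \Xi/r^2 - 2M/r^3 + O(1/r^4)$ and $\pr_r \De/(r^2+a^2) = 4k^2 r + 2\Xi/r - 2M/r^2 + O(1/r^3)$. Inserting these, the four summands carry leading $k^4 r^2$ coefficients $+16,-18,+4,-2$ which sum to $0$, and a check at the next order shows that the $r^0$ coefficients (involving only $k^2\Xi$ and $k^4 a^2$) also cancel, leaving
\begin{align*}
V_{00}(r) & = \frac{18\, k^2 M}{r} + O(1/r^2), && r\to+\infty,
\end{align*}
which gives $\lim_{r\to+\infty}V_{00}(r)=0$ at once. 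For the third claim, the chain rule gives $V_{00}'(r) = \frac{\De}{r^2+a^2}\, \pr_r V_{00}(r)$; combining $\pr_r V_{00}(r) = -18 k^2 M/r^2 + O(1/r^3)$ with $\De/(r^2+a^2) = k^2 r^2 + O(1)$ produces $\lim_{r\to+\infty} V_{00}'(r) = -18\, k^4 M < 0$.

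The only real obstacle is the positivity, which I resolve by citation; the limits are then routine, with the only non-trivial input being that three successive Laurent orders of $V_{00}$ at infinity must be tracked before the leading non-vanishing $1/r$ term emerges.
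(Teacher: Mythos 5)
Your proposal is correct, and for the positivity claim it makes exactly the same move as the paper: defer to~\cite{Gra.Hol23}. (One small caveat: the paper describes the cited argument as a long direct computation showing that $P(r):=(r^2+a^2)^4V_{00}(r)$ is a degree-$7$ polynomial with $\pr_r^jP(r_+)>0$ for $0\leq j\leq 7$, not as an identification with a Regge--Wheeler potential via a Chandrasekhar transformation; since both of you simply invoke the reference, this mischaracterisation is harmless, but worth correcting.) For the two limits your route is genuinely different: you carry out an explicit Laurent expansion at $r=+\infty$ and find $V_{00}(r)=18k^2M/r+O(1/r^2)$, hence $V_{00}\to0$ and, via $V_{00}'=\frac{\De}{r^2+a^2}\pr_rV_{00}$, $\lim_{r\to+\infty}V_{00}'=-18k^4M<0$. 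I checked your expansion: the $r^2$ coefficients $16,-18,4,-2$ (times $k^4$) cancel, the $r^0$ coefficients $16k^2\Xi$, $-18k^2\Xi-a^2k^4$, $6k^2\Xi$, $-4k^2\Xi+a^2k^4$ cancel, and the $1/r$ coefficients $-16k^2M+36k^2M-10k^2M+8k^2M=18k^2M$ survive, so the computation is right. The paper instead reads both limits off the degree-$7$ polynomial fact already imported from the citation: $V_{00}=P/(r^2+a^2)^4\sim c/r$ with $c>0$ the (positive) leading coefficient of $P$, whence $V_{00}\to0$ and $V_{00}'\to-k^2c<0$. The paper's argument is shorter given the citation; yours is self-contained for the limits, pins down the explicit constant $18k^2M$, and as a by-product independently confirms the vanishing of the top three Laurent orders that the degree-$7$ claim encodes. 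Either way the lemma is established.
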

\begin{proof}
  In~\cite[Section 5.4]{Gra.Hol23}, a (long) direct computation shows that $P(r):=(r^2+a^2)^4V_{00}(r)$ is a polynomial of degree $7$ in $r$, with $\pr_r^jP(r_+)>0$ for all $0\leq j\leq 7$. Hence, $V_{00}$ is positive on $[r_+,+\infty)$, and decays to $0$ when $r\to+\infty$. Moreover $V'_{00}(r) \sim k^2r^2\pr_r\le(\frac1{r}\ri) \pr_r^7P(r_+) \to -k^2\pr^7_rP(r_+)<0$ when $r\to+\infty$ and this finishes the proof of the lemma.  
\end{proof}


We have the following estimates on $V_1$.
\begin{lemma}\label{lem:posV1}
  For all $|m|\geq 2$ and all $a>0$, we have
  \begin{align}\label{est:ineqV1}
    0 & < \frac{V_1(r)}{(\Xi\om_+m)} < \frac{8}{r}\le(k^2r_+^2+2\ri),
  \end{align}
  for all $r> r_+$.
\end{lemma}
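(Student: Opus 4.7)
The plan is to reduce both inequalities to polynomial estimates on a single rational function. Directly from the definition of $V_1$, one checks
$$\frac{V_1(r)}{\Xi\om_+ m} \;=\; \frac{2\,N(r)}{(r^2+a^2)^2}, \qquad N(r) \;:=\; 4r\De(r) - (r^2-r_+^2)\,\pr_r\De(r),$$
so both bounds become statements about the polynomial $N(r)$ on $[r_+,+\infty)$. Note that the $m$-dependence cancels, so the bounds are uniform in $|m|\geq 2$ (and in fact require only $m\neq 0$, $a>0$).

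For the lower bound, I would observe that $N(r_+)=0$ (since $\De(r_+)=0$) and factor $N(r)=(r-r_+)\,Q(r)$ with $Q$ a quadratic. Substituting $r=r_++s$ and eliminating $M$ via the identity $2Mr_+=(r_+^2+a^2)(1+k^2r_+^2)$ coming from $\De(r_+)=0$, a direct computation yields
$$Q(r_+ + s) \;=\; A\,s^2 + 3\,\pr_r\De(r_+)\,s + 2r_+\,\pr_r\De(r_+), \qquad A := 2+2k^2a^2+4k^2r_+^2.$$
The coefficient $A$ is manifestly positive, and the subextremality of the Kerr-adS parameters is exactly the statement $\pr_r\De(r_+)>0$. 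All three coefficients of $Q(r_++s)$ are therefore strictly positive, so $N(r_++s)=s\,Q(r_++s)>0$ for every $s>0$.

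For the upper bound, I would consider $P(r):=8(k^2 r_+^2 + 2)(r^2+a^2)^2 - 2r\,N(r)$. The raw expansion produces a potentially negative coefficient at the $r^2$ level, but the correct move is to borrow a multiple of $r^2(r^2-r_+^2)$ from the leading $r^4$ coefficient. After this regrouping one obtains
$$P(r) = 4(3-k^2a^2)\,r^2(r^2-r_+^2) + 8\bigl(r_+^2(1+k^2a^2)+3a^2\bigr)r^2 + 12M\,r^3 + 4Mr_+^2\,r + 8(k^2 r_+^2 + 2)\,a^4.$$
Admissibility gives $k^2 a^2 < 1 < 3$, so each term on the right is non-negative for $r\geq r_+$ and the last is strictly positive; hence $P(r)>0$, which is exactly the claimed upper bound. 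The main (and only real) obstacle is finding these two algebraic regroupings -- recognising that $3Ar_++B=3\pr_r\De(r_+)$ and $Q(r_+)=2r_+\pr_r\De(r_+)$ via $\De(r_+)=0$, and absorbing an $r^2(r^2-r_+^2)$ term from the leading coefficient of $P$ to dominate the indefinite $r^2$ contribution; once the right forms are identified, positivity is immediate.
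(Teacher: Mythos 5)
Your proof is correct and follows essentially the same route as the paper: clear the denominator, observe the $m$-dependence cancels, and reduce both bounds to positivity of explicit polynomials on $(r_+,+\infty)$ — your lower-bound factorisation $N=(r-r_+)Q$ with coefficients $A$, $3\pr_r\De(r_+)$, $2r_+\pr_r\De(r_+)$ is exactly the paper's computation of $\pr_r^jP(r_+)$ for $j=0,1,2$. The only cosmetic difference is in the upper bound, where the paper drops the negative terms and bounds the cubic by $2(k^2r_+^2+2)(r^2+a^2)r$ using $ak<1$, while you regroup the full difference polynomial; both verifications check out.
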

\begin{proof}
  Note that since $a\neq0$, $\Xi\om_+m\neq0$. We have
  \begin{align}\label{eq:calculV1}
    \begin{aligned}
      P(r) := 4^{-1}(\Xi\om_+m)^{-1}(r^2+a^2)^2V_1(r) & = \frac12 \pr_r\Delta (r_+^2-r^2) +2\Delta r \\
                                                & = \le(2k^2r_+^2+1+a^2k^2\ri)r^3 -3Mr^2 \\
                                                & \quad + \le((1+a^2k^2)r_+^2 + 2a^2\ri)r -Mr_+^2,
    \end{aligned}
  \end{align}
  which is a third order polynomial in $r$ with strictly positive leading coefficient. From the first line, we have
  \begin{align*}
    P(r_+) & = 0, \\
    \pr_rP(r_+) & = \le(\frac12\pr_r^2\Delta (r_+^2-r^2) +r\pr_r\De + 2\De\ri)\bigg|_{r=r_+} = \pr_r\Delta(r_+)r_+>0, \\
    \pr_r^2P(r_+) & = \le(\frac12\pr_r^3\Delta (r_+^2-r^2) + 3\pr_r\De\ri)\bigg|_{r=r_+} = 3\pr_r\De(r_+) >0,
  \end{align*}
  hence $P$ is strictly positive on $(r_+,+\infty)$ and this proves the first inequality of~\eqref{est:ineqV1}. From the last line of~\eqref{eq:calculV1} using that $ak<1$, one has
  \begin{align*}
    P(r) \leq 2(k^2r_+^2+1)r^3 + 2(r_+^2+a^2)r \leq 2(k^2r_+^2+2)(r^2+a^2)r, 
  \end{align*}
  for all $r>r_+$, and the second inequality of~\eqref{est:ineqV1} follows. 
\end{proof}

\begin{lemma}\label{lem:boundonV0}
  We have
  \begin{align*}
    \Delta -k^2(r-r_+)^2(r+r_+)^2\geq 0,
  \end{align*}
  for all $r\geq r_+$. 
\end{lemma}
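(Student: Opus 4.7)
The plan is to define $Q(r) := \De(r) - k^2(r-r_+)^2(r+r_+)^2 = \De(r) - k^2(r^2-r_+^2)^2$ and exploit the key algebraic observation that, although both summands are quartic in $r$ with the \emph{same} leading coefficient $k^2$, their difference is in fact a \emph{quadratic} polynomial. Expanding $\De(r) = k^2 r^4 + (1+a^2k^2)r^2 - 2Mr + a^2$ and $k^2(r^2-r_+^2)^2 = k^2 r^4 - 2k^2 r_+^2 r^2 + k^2 r_+^4$, the $k^2 r^4$ terms cancel and one obtains
\begin{align*}
  Q(r) = \left(1 + a^2k^2 + 2k^2 r_+^2\right) r^2 - 2Mr + \left(a^2 - k^2 r_+^4\right),
\end{align*}
an upward-opening parabola since its leading coefficient is strictly positive.

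The rest of the argument is elementary parabola analysis. First, $Q(r_+) = \De(r_+) = 0$, since $(r^2-r_+^2)^2$ vanishes at $r=r_+$. Second, because $(r^2-r_+^2)^2$ has a \emph{double} zero at $r_+$, its first derivative also vanishes there, so $Q'(r_+) = \De'(r_+)$. The admissibility (subextremality) hypothesis — that $r_+$ is the largest of two \emph{distinct} non-negative roots of $\De$, with $\De > 0$ on $(r_+,+\infty)$ — forces $\De'(r_+) > 0$. Consequently $Q$ is an upward-opening quadratic vanishing at $r_+$ with strictly positive slope there, which forces $r_+$ to be its \emph{larger} root. Hence $Q(r) \geq 0$ for all $r \geq r_+$, which is precisely the claim.

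I do not foresee a serious obstacle: the entire proof hinges on spotting the quartic-to-quadratic cancellation, after which the sign information follows from a one-line observation about upward-opening parabolas with a known simple root.
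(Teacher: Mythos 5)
Your proof is correct and follows essentially the same route as the paper: both reduce the difference to the quadratic $\left(2k^2r_+^2+1+a^2k^2\right)r^2-2Mr+(a^2-k^2r_+^4)$ with positive leading coefficient, note the root at $r=r_+$, and conclude from local positivity just after $r_+$. Your explicit justification of that last step via $Q'(r_+)=\De'(r_+)>0$ (using subextremality) is a welcome elaboration of what the paper leaves implicit.
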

\begin{proof}
  It is enough to note that
  \begin{align*}
    \Delta -k^2(r-r_+)^2(r+r_+)^2 & = \le(2k^2r_+^2+1+a^2k^2\ri)r^2 -2Mr +(a^2-k^2r_+^4)
  \end{align*}
  is a polynomial of degree 2 in $r$ with positive leading coefficient, has a root in $r=r_+$, and is locally positive after $r>r_+$. 
\end{proof}

\begin{lemma}\label{lem:V0'}
  We have
  \begin{align}\label{eq:derivV0prooflemma1}
    V_0[\la_{m\ell}]'(r) & = (\la_{m\ell}-2-a^2k^2) w'(r) - (\Xi\om_+m)^22 \frac{\De}{r^2+a^2} \left(\frac{r^2-r_+^2}{r^2+a^2}\right) \frac{2r(r_+^2+a^2)}{(r^2+a^2)^2},
  \end{align}
  where
  \begin{align*}
    w := \frac{\De}{(r^2+a^2)^2},
  \end{align*}
  and we record that
  \begin{align}\label{eq:derivV0prooflemma2}
    \begin{aligned}
    w'(r) & = \frac{\De}{(r^2+a^2)^4}\left(\pr_r\De(r^2+a^2) -4r\Delta\right) \\
    & = - \frac{\De}{(r^2+a^2)^4}\left(\left(2 -2 a^2k^2\right)r^3 -6M r^2 + 2a^2(1-a^2k^2)r  +2Ma^2\right).
    \end{aligned}
  \end{align}
  In particular, we have $V_0'(r) \xrightarrow{r\to+\infty}0$.
\end{lemma}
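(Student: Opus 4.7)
The plan is straightforward differentiation, using that $' = \pr_{r^\star} = \frac{\De}{r^2+a^2}\pr_r$. First, I would use the first line of the definition of $V_0[\la]$ in Section~\ref{sec:prelim} to write
\begin{align*}
  V_0[\la] = (\la - 2 - a^2k^2)\,w \;-\; (\Xi\om_+m)^2\,\frac{(r^2-r_+^2)^2}{(r^2+a^2)^2},
\end{align*}
which isolates the $\la$-dependence in the first summand and immediately yields the $(\la_{m\ell}-2-a^2k^2)w'$ piece of~\eqref{eq:derivV0prooflemma1}. For the second summand I apply the quotient rule: setting $A = r^2-r_+^2$ and $B = r^2+a^2$ (both with $\pr_r = 2r$), one has $\pr_r(A^2/B^2) = 4rA(B-A)/B^3$, and the key simplification $B - A = r_+^2 + a^2$ gives
\begin{align*}
  \pr_r\le(\frac{(r^2-r_+^2)^2}{(r^2+a^2)^2}\ri) = \frac{4r(r^2-r_+^2)(r_+^2+a^2)}{(r^2+a^2)^3}.
\end{align*}
Multiplying by $\De/(r^2+a^2)$ to convert to $\pr_{r^\star}$ and regrouping the factors as $2\cdot\frac{\De}{r^2+a^2}\cdot\frac{r^2-r_+^2}{r^2+a^2}\cdot\frac{2r(r_+^2+a^2)}{(r^2+a^2)^2}$ reproduces the second term of~\eqref{eq:derivV0prooflemma1}.

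For~\eqref{eq:derivV0prooflemma2}, I would first compute $\pr_r w = (\pr_r\De\cdot(r^2+a^2) - 4r\De)/(r^2+a^2)^3$ by the quotient rule applied to $w = \De/(r^2+a^2)^2$, and then multiply by $\De/(r^2+a^2)$ to produce the first displayed form. The polynomial expression on the second line is then pure bookkeeping: substitute $\De = (r^2+a^2)(1+k^2r^2) - 2Mr$, expand $\pr_r\De\cdot(r^2+a^2) - 4r\De$, and collect powers of $r$. The leading $r^5$ contributions from the two terms cancel, the $a^2k^2 r^3$ coefficients combine to $(2a^2k^2-2)r^3$, and the remaining constant and linear coefficients organise into the claimed cubic.

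Finally, for the limit at infinity, I would use the decomposition $w = k^2 + (1-a^2k^2)/(r^2+a^2) - 2Mr/(r^2+a^2)^2$ (a direct rewriting of $\De/(r^2+a^2)^2$), which shows $\pr_r w = O(r^{-3})$ at infinity, so $w' = (\De/(r^2+a^2))\,\pr_r w = O(r^{-1}) \to 0$ since $\De/(r^2+a^2) = O(r^2)$. The remaining term in $V_0[\la_{m\ell}]'$ has numerator $\De\cdot r(r^2-r_+^2)(r_+^2+a^2) = O(r^7)$ against denominator $(r^2+a^2)^4 = O(r^8)$, so it decays like $r^{-1}$ and the conclusion $V_0'(r) \to 0$ follows. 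No step here is a genuine obstacle; the only place requiring minor care is the polynomial expansion leading to~\eqref{eq:derivV0prooflemma2}, where the cancellation of the $r^5$ coefficients should be double-checked.
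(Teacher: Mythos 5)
Your computation is correct and matches the paper's (unwritten) proof: the lemma is stated in the paper as a record of direct computations with no proof supplied, and your differentiation — splitting $V_0[\la]$ into the $\la$-dependent multiple of $w$ plus the $(\Xi\om_+m)^2$ term, using $\pr_{r^\star}=\frac{\De}{r^2+a^2}\pr_r$, the simplification $B-A=r_+^2+a^2$, and the polynomial expansion with the $r^5$ cancellation — reproduces both displayed identities and the decay at infinity exactly. Nothing is missing.
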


\subsection{Estimates when $\Re(V) \leq 0$}
We have the following lemma which will be used in the proofs of Proposition~\ref{prop:concavityargument} and Proposition~\ref{prop:envelope} in the next section.
\begin{lemma}\label{lem:cruxfunction}
  Let
  \begin{align*}
    \NN & := \le\{r\in(r_+,+\infty):V_0[\la_{m\ell}](r) + V_{00}(r)\leq 0\ri\}.
  \end{align*}
  Let $K$ be a compact subset of strictly rotating ($a>0$) admissible Kerr-adS parameters. There exists $\varep_0>0$ and $m_0\in\mathbb{N}$ depending on $K$ such that for all $|m|\geq m_0$, $\ell\geq|m|$, if
    \begin{align}\label{eq:epsiloncondition}
      \varep := \frac{a-kr_+^2}{k(r_+^2+a^2)} \leq \varep_0,
    \end{align}
    then, for all $r\in\NN$,
    \begin{align}\label{est:cruxfunction}
      f(r) \leq -\frac{(\Xi \om_+ m)^2}{r} \DM,
    \end{align}
    where $\DM=\DM(K)>0$ and
    \begin{align*}
      f & := 4|V_1|\sqrt{|V_0[\la_{m\ell}]+V_{00}|} +2V_0'.
    \end{align*}
\end{lemma}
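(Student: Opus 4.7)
My plan is to bound each summand of $f = 4|V_1|\sqrt{|V_0+V_{00}|} + 2V_0'$ separately, via the decomposition $V_0 = w\lat_{m\ell} + \omega^2 H$ with $\omega := \Xi\om_+ m$, $H := G/(k^2(r^2+a^2)^2)$, and $G := \De - k^2(r-r_+)^2(r+r_+)^2$ (so $H \geq 0$ by Lemma~\ref{lem:boundonV0}). A first reduction: $\NN$ is empty unless $\lat_{m\ell} < 0$, since otherwise $V_0 \geq 0$ (as $w,H\geq 0$) and $V_{00} > 0$ (Lemma~\ref{lem:posV00}) already give $V_0 + V_{00} > 0$. Assuming $\lat_{m\ell} < 0$, the decomposition immediately yields $|V_0 + V_{00}| \leq w\,|\lat_{m\ell}|$ on $\NN$; combined with Lemma~\ref{lem:angular} (applied to $\la_{m|m|}$, together with the monotonicity $\la_{m\ell} \geq \la_{m|m|}$ so $|\lat_{m\ell}| \leq |\lat_{m|m|}|$ when both are negative), this gives the uniform bound $|\lat_{m\ell}| \leq C(K)\,\varep_0\,m^2$ for $|m| \geq m_0(K)$. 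Combining with Lemma~\ref{lem:posV1} ($|V_1| \leq C\,|\omega|/r$), the uniform upper bound $w \leq C(K)$, and the identity $|m||\omega| = \Xi\om_+ m^2 \lesssim_K \omega^2$ (valid since $\om_+$ is bounded below on any compact $K$ of strictly rotating parameters), the first summand is thus $\leq C(K)\,\sqrt{\varep_0}\,\omega^2/r$.

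For the second summand, I write $V_0' = \lat_{m\ell}\,w' + \omega^2 H'$. Inspection of the explicit formula for $w'$ in Lemma~\ref{lem:V0'} shows $r|w'|$ is bounded uniformly on $(r_+,+\infty)\times K$, so $|\lat_{m\ell}\,w'| \leq C(K)\,\varep_0\,\omega^2/r$. The crux is then to extract from $\omega^2 H'$ a strictly negative contribution of size $-\omega^2/r$. The function $H$ is smooth, non-negative, and vanishes both at $r=r_+$ and as $r\to +\infty$; a direct expansion at infinity yields $-rH'(r) \to 2(2k^2r_+^2 + 1 + a^2k^2)$, uniformly on $K$. Hence there exists $R_0 = R_0(K)$ such that $-rH' \geq c(K) > 0$ on $[R_0, +\infty)$, and the task reduces to showing $\NN \subset [R_0, +\infty)$ for $\varep_0$ small and $|m| \geq m_0(K)$. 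I would prove this by a case split on $(r_+, R_0)$: on the compact subinterval $[r_+ + \delta, R_0]$ with $\delta(K) > 0$ small, $H$ admits a uniform positive lower bound $h(K)$ (by continuity and positivity), so $V_0 + V_{00} \geq \omega^2(h - C\varep_0) > 0$ for $\varep_0 \leq h/(2C)$; on the near-horizon piece $(r_+, r_+ + \delta)$, the expansion $w = k^2 H + k^2 U$ with $U = O((r-r_+)^2)$ gives
\[
  V_0 + V_{00} = H\,k^2(\la_{m\ell} - 2 - a^2k^2) + V_{00} + O\!\left((r-r_+)^2\,|\lat_{m\ell}|\right),
\]
whose main term is non-negative for $|m|$ large (since $\la_{m\ell} > 2 + a^2k^2$ by Lemma~\ref{lem:angular}) and whose error is bounded by the main term times $\delta\varep_0$, so that the sum of main and error remains $\geq 0$ once $\delta\varep_0$ is small; the residual positive contribution $V_{00} \geq V_{00}(r_+)/2 > 0$ then gives $V_0 + V_{00} > 0$.

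Combining the three estimates on $\NN \subset [R_0, +\infty)$ yields
\[
  f \leq C(K)\sqrt{\varep_0}\,\frac{\omega^2}{r} + 2C(K)\varep_0\,\frac{\omega^2}{r} - 2c(K)\frac{\omega^2}{r} \leq -\DM\,\frac{\omega^2}{r}
\]
for $\DM = \DM(K) > 0$, provided $\varep_0 = \varep_0(K)$ is chosen small enough. The hardest step is the uniform reduction $\NN \subset [R_0, +\infty)$ in the near-horizon regime, where $\omega^2 H$ and $w\,|\lat_{m\ell}|$ are \emph{a priori} of the same order: the sign of $V_0 + V_{00}$ only emerges from the next-order expansion of $w - k^2 H$, whose coefficient must be controlled uniformly in all parameters of $K$ and uniformly in $|m|\geq m_0(K)$.
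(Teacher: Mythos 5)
Your proof is correct, and while it shares the overall architecture of the paper's argument (reduce to $\lat_{m\ell}<0$ with $|\lat_{m\ell}|\les_K\varep_0 m^2$ via Lemma~\ref{lem:angular} and the monotonicity $\la_{m\ell}\geq\la_{m|m|}$; bound $4|V_1|\sqrt{|V_0+V_{00}|}$ by $\sqrt{\varep_0}\,(\Xi\om_+m)^2/r$ using Lemmas~\ref{lem:posV1} and~\ref{lem:boundonV0}; localize $\NN$ away from the horizon; extract a negative $(\Xi\om_+m)^2/r$ contribution from $V_0'$), it takes a genuinely different route in the two technical steps. For the localization, the paper studies the quartic $Z(r)=(1-\varept)\De-k^2(r^2-r_+^2)^2$, proportional to $(r^2+a^2)^2V_0$, and shows $Z>0$ on $(r_+,r_\varept)$ by a convexity argument (negative leading coefficient $-k^2\varept$, simple root at $r_+$, convexity up to $r_\varept\geq 3M/\Xi$), yielding the explicit threshold $\NN\subset[3M/\Xi,+\infty)$; your compactness-plus-near-horizon-expansion argument reaches the same conclusion with a non-explicit $R_0(K)$, which suffices since only uniformity over $K$ is required (and your near-horizon step correctly exploits that $H$ vanishes to first order at $r_+$, with $\pr_r G(r_+)=\pr_r\De(r_+)>0$, while the correction $w-k^2H$ vanishes to second order). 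For $V_0'$, the paper groups it as $(\la_{m\ell}-2-a^2k^2)w'+(\text{explicit term})$, discards the first summand by the sign $w'<0$ on $[3M/\Xi,+\infty)$, and defines $\DM$ from the explicit term; you instead group as $\lat_{m\ell}w'+(\Xi\om_+m)^2H'$, treat $\lat_{m\ell}w'$ as an $O(\varep_0(\Xi\om_+m)^2/r)$ error using the uniform bound on $r|w'|$, and extract negativity from $-rH'\to 2(2k^2r_+^2+1+a^2k^2)>0$. That limit is correct: since $'$ denotes the $r^\star$-derivative, the Jacobian $\De/(r^2+a^2)\sim k^2r^2$ upgrades the $O(r^{-3})$ decay of $\pr_rH$ to the $O(r^{-1})$ decay you need, and your constant equals the paper's asymptotic value $4k^2(r_+^2+a^2)$ plus the $2\Xi$ contribution of $-rw'/k^2$. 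The paper's route produces explicit constants ($\varept_1$, $\varept_2$, $\DM$); yours is softer but equally rigorous, and arguably more transparent about where the decisive sign comes from.
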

\begin{proof}
  Define
  \begin{align}\label{eq:defvarep0start}
     \varep_0 := \min_{(M,a,k)\in K}\le(\frac12 \frac{k(r_+^2+a^2)}{kr_+^2+a^2} \frac{\om_+^2}{k^2} \varept_0\ri), && \text{with} && \varept_0 := \min\le(\frac12, \varept_1, \varept_2\ri), 
  \end{align}
  where
  \begin{align*}
    \varept_1 & := \frac{1}{\le(2k^2r_+^2+1+a^2k^2\ri)}\le((1+a^2k^2) +\frac{18Mk^2}{1-a^2k^2}\ri),\\
    \varept_2 & := \le(\frac{k\DM}{32(k^2r_+^2+2)}\inf_{(3M,+\infty)}\left(\frac{(r^2+a^2)}{\sqrt{\De}}\right)\ri)^2,
  \end{align*}
  with
  \begin{align*}
    \DM & := \inf_{(3M,+\infty)}\left(2r \frac{\De}{r^2+a^2} \left(\frac{r^2-r_+^2}{r^2+a^2}\right) \frac{2r(r_+^2+a^2)}{(r^2+a^2)^2}\right).
  \end{align*}
  Note that with the above definitions, $\varep_0>0$. Assume that $\varep\leq \varep_0$. By the uniform angular eigenvalue limit~\eqref{lim:lam}, there exists\footnote{$m_0$ can be taken uniform for all parameters in $K$ since $\om_+>0$ (because $a>0$) for all $(M,a,k)\in K$, and since the limit in~\eqref{lim:lam} is uniform.} $m_0=m_0(K)>0$ such that for all $|m|\geq m_0$ and all $\ell\geq|m|$,
  \begin{align}\label{eq:mainboundvarept}
    \varept := -\frac{k^2\lat_{m\ell}}{(\Xi\om_+m)^2} & \leq -\frac{k^2}{(\Xi\om_+)^2} \frac{\lat_{m|m|}}{m^2} \leq \frac{k^2}{(\Xi\om_+)^2} \Xi^2 \varep \frac{kr_+^2+a}{k(r_+^2+a^2)} + \frac{\varept_0}{2} \leq \varept_0,
  \end{align}
  for all parameters $(M,a,k)\in K$. Now we assume that $\NN\neq\emptyset$ (if else there is nothing to prove). Using that $V_{00}>0$ (Lemma~\ref{lem:posV00}) and Lemma~\ref{lem:boundonV0}, this implies that $\varept>0$. Furthermore, using again Lemmas~\ref{lem:posV00} and~\ref{lem:boundonV0}, using the bounds on $V_1$ (Lemma~\ref{lem:posV1}), and finally using that $\varept \leq \varept_2$ by~\eqref{eq:mainboundvarept}, one has
  \begin{align}\label{eq:prelimfproofunif}
    \begin{aligned}
    f(r) & = 4|V_1(r)|\sqrt{-V_0[\la_{m\ell}](r)-V_{00}(r)} +2V_0[\la_{m\ell}]'(r) \\
         & \leq 32\frac{(\Xi \om_+m)^2}{kr}(k^2r_+^2+2)\sqrt{\varept}\frac{\sqrt{\De}}{(r^2+a^2)} + 2V_0[\la_{m\ell}]'(r) \\
         & \leq \frac{(\Xi\om_+m)^2}{r}D + 2V_0[\la_{m\ell}]'(r),
       \end{aligned}
  \end{align}
   for all $r\in\NN$, where $w := \frac{\De}{(r^2+a^2)^2}$. Our goal now is to obtain a bound on $V'_0$.\\

  First we show that $\NN \subset \le[3M/\Xi,+\infty\ri)$ using~\eqref{eq:mainboundvarept}. From the definition of $V_0$, one has
  \begin{align*}
    \left(\frac{k}{\Xi\om_+m}\right)^2(r^2+a^2)^2[V_0[\la_{m\ell}](r) & = (1-\varept)\Delta -k^2(r-r_+)^2(r+r_+)^2 \\
                                                                     & = \le(2k^2r_+^2+1+a^2k^2\ri)r^2 -2Mr +(a^2-k^2r_+^4) \\
                                                                     & \quad -\varept(r^2+a^2)(1+k^2r^2) + 2Mr\varept\\
                                                                     & =: Z(r),
  \end{align*}
  and we note that 
  \begin{align}\label{eq:d2Zconvex}
    \pr_r^2Z(r) & = 2 \le(2k^2r_+^2+1+a^2k^2\ri) -2\varept\le(1+a^2k^2\ri) - 12\varept k^2r^2.
  \end{align}
  By~\eqref{eq:mainboundvarept} (since $\varept \leq \varept_1$), one has
  \begin{align}\label{eq:conditionvarep2}
     \frac{3M}{1-a^2k^2} \leq \frac16 \varept^{-1}k^{-2}\le(2k^2r_+^2+1+a^2k^2\ri) - \frac16 k^{-2}(1+a^2k^2) =: r_{\varept}.
  \end{align}
  The fourth order polynomial $Z$ has negative leading coefficient $-k^2\varept$ (since $\varept>0$), has a root at $r=r_+$, is positive locally after $r>r_+$ if $\varept<1$ (which is true by~\eqref{eq:mainboundvarept} since $\varept\leq 1/2$), and is convex for\footnote{Using~\eqref{eq:conditionvarep2}, one has $r_\varept\geq 3M >r_+$.} $r_+< r < r_\varept$ by~\eqref{eq:d2Zconvex}. Hence $V_0(r)>0$ for all $r\in(r_+,r_\varept)$, which, using~\eqref{eq:conditionvarep2}, implies
  \begin{align}\label{eq:posV0vareptbis}
    \NN \subset \le[3M/\Xi,+\infty\ri),
  \end{align}
  as desired.\\
  
  We can now estimate $V_0[\la_{m\ell}]'$ using the computations of Lemma~\ref{lem:V0'}. From~\eqref{lim:lam}, for $|m|$ sufficiently large depending on $K$, one has $\la_{m\ell}-2-a^2k^2>0$ for all $\ell\geq |m|$. Combined with~\eqref{eq:derivV0prooflemma1} and~\eqref{eq:derivV0prooflemma2}, using~\eqref{eq:posV0vareptbis}, we infer
  \begin{align}\label{eq:V'0leqDM}
    \begin{aligned}
      r\in \NN \implies r \geq \frac{3M}{1-a^2k^2} & \implies \left(2 -2 a^2k^2\right)r^3 -6M r^2 \geq 0 \\
      & \implies w'(r)<0 \\
      & \implies V_0[\la_{m\ell}]'(r) < -\frac{(\Xi \om_+ m)^2}{r} \DM,
    \end{aligned}
  \end{align}
  where $\DM>0$ is the constant from the definition~\eqref{eq:defvarep0start} of $\varep_0$. Plugging~\eqref{eq:V'0leqDM} in~\eqref{eq:prelimfproofunif} we obtain the desired~\eqref{est:cruxfunction} and this finishes the proof of the lemma.
\end{proof}

\subsection{Proof of Proposition~\ref{prop:concavityargument}}
Using the regularity at the horizon~\eqref{eq:defreghorradial}, we deduce from~\eqref{eq:wronskien} and Lemma~\ref{lem:posV1} that
\begin{align}\label{eq:posW}
  W(r^\star) & = 2i\int_{-\infty}^{r^\star} V_1|R|^2\,\d r^\star =
               \begin{cases}
                 +i |W(r^\star)| & \text{if $m>0$} \\
                 -i |W(r^\star)| & \text{if $m<0$},
               \end{cases}
\end{align}
for all $r^\star\in(-\infty,0)$. Now, assume that at $r^\star=r^\star_0$, we have $\wp(r^\star_0) = \wp'(r^\star_0) = 0$. If $\Re(V(r^\star_0))>0$, we immediately conclude from~\eqref{eq:wpderiv1} that $R(r^\star_0)=R'(r^\star_0)=0$ hence $R=0$ on $(-\infty,0)$ by Cauchy-Lipschitz. If else, we have $|R'|=\sqrt{-\Re(V)}|R|$ at $r^\star=r^\star_0$, and one can assume that $|R|\neq0$ otherwise $R=0$ on $(-\infty,0)$ by Cauchy-Lipschitz. Moreover, from the vanishing of $\wp(r^\star_0)$ and~\eqref{eq:posW}, this yields
\begin{align*}
  2R\bar R' = W = \pm i |W| = \pm2i |R||R'| = \pm2i \sqrt{-\Re(V)}|R|^2 
\end{align*}
at $r^\star=r^\star_0$, with signs $+$ if $m>0$ and $-$ if $m<0$. Plugging this in the expression~\eqref{eq:wpderiv2} of $\wp''$, we deduce that
\begin{align*}
  \wp''(r^\star_0) & =  \left(f(r^\star_0) + 2V_{00}'(r^\star_0)\right) |R|^2,
\end{align*}
where $f$ is the function defined in Lemma~\ref{lem:cruxfunction} (recall that $V_1>0$ if $m>0$ and $V_1<0$ if $m<0$ by Lemma~\ref{lem:posV1}). From~\eqref{est:cruxfunction}, we have
\begin{align*}
  f(r_0) + 2V_{00}'(r_0) & \leq - \frac{(\Xi\om_+m)^2}{r_0}\DM + 2V'_{00}(r_0). 
\end{align*}
Using that $\lim_{r\to+\infty}V_{00}'(r)<0$ (Lemma~\ref{lem:posV00}), the map $r\mapsto - \frac{(\Xi\om_+m)^2}{r}\DM + 2V'_{00}(r)$ is uniformly negative on $[r_+,+\infty]$ provided that $|m|$ is sufficiently large depending on $K$, and we deduce that $\wp''(r^\star_0)<0$ as desired.

\section{High-frequency solutions above the Hawking-Reall bound}\label{sec:envelope}
This section is dedicated to the proof of the following proposition.
\begin{proposition}\label{prop:envelope}
  Let $K$ be a compact subset of strictly rotating ($a>0$) admissible Kerr-adS parameters. There exists $\varep_0>0$ depending on $K$ such that for all parameters $(M,a,k)$ in $K$ with $0 < a-kr_+^2 \leq \varep_0k(r_+^2+a^2)$, there exists $m_1(M,a,k)>0$ such that for all $|m|\geq m_1$ the following holds. For all solutions $R$ to the radial Teukolsky equation~\eqref{eq:radialstatTeuk} with $\la=\la_{m|m|}$, there exists $r^\star\in(-\infty,0)$ such that $\wp[R](r^\star)<0$. 
\end{proposition}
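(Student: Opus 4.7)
The strategy is to analyse~\eqref{eq:radialstatTeuk} near conformal infinity $r^\star = 0$ by a Liouville-Green/WKB method with large parameter $|m|$, in the regime $a > kr_+^2$. Lemma~\ref{lem:angular} combined with $a > kr_+^2$ yields
\begin{align*}
\lim_{|m|\to\infty} \frac{\lat_{m|m|}}{m^2} = -\Xi^2 \frac{(a-kr_+^2)(a+kr_+^2)}{k^2(r_+^2+a^2)^2} < 0,
\end{align*}
so $\Re V = V_0 + V_{00}$ is uniformly negative of order $|m|^2$ on a small interval $[r^\star_1, 0]$, while $\Im V = -V_1$ is of order $|m|$ only (Lemma~\ref{lem:posV1}). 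Setting $\omo := \sqrt{-\Re V}$, I plan to construct two WKB branches $R_\pm \sim \omo^{-1/2} e^{\pm i \int \omo}$ with $C^1$-controlled errors on $[r^\star_1, 0]$, so that any solution $R$ decomposes as $R = \alpha R_+ + \beta R_-$.

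Next I would analyse $\wp$ on each pure branch. The leading oscillation contributes nothing to $\wp$, so the sign must be extracted from the amplitude evolution. Writing $R = A e^{i\phi}$ in polar form, the imaginary part of~\eqref{eq:radialstatTeuk} gives the exact identity $(A^2 \phi')' = -V_1 A^2$, consistent with~\eqref{eq:wronskien}. Applied to the two branches (for which $\phi' = \pm \omo + O(|m|^{-1})$), this yields
\begin{align*}
\wp[R_\pm] = -\frac{A_\pm^2}{\omo}\,\big(\omo' \pm V_1\big) + O(|m|^{-2}).
\end{align*}
From the expansions of $V_0, V_1$ as $r \to \infty$, both $\omo'$ and $V_1$ are of order $|m|\,|r^\star|$ near $r^\star = 0$, with ratio
\begin{align*}
\frac{\omo'}{V_1} \;\xrightarrow[|m|\to\infty]{}\; \frac{a}{4\sqrt{(a-kr_+^2)(a+kr_+^2)}},
\end{align*}
which tends to $+\infty$ as $a - kr_+^2 \to 0^+$. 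Choosing $\varep_0$ small enough will then force $\omo' > |V_1|$ uniformly on $[r^\star_1, 0]$, so that $\wp[R_\pm] < 0$ on $[r^\star_1, 0]$ for both signs when $|m|$ is large.

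For a general nontrivial combination $R = \alpha R_+ + \beta R_-$, the expansion of $|R|^2$ contains a cross term $2\Re(\alpha\bar\beta R_+ \bar R_-)$ oscillating at frequency $2\omo \sim |m|$. Differentiating, this cross term contributes an oscillatory piece to $\wp[R]$ of amplitude $\sim |\alpha\bar\beta|$, which dominates the monotone single-branch contributions (of order $(|\alpha|^2 + |\beta|^2)/|m|$) once $|m|$ is large. If $\alpha\beta = 0$, the single-branch estimate applies directly and $\wp[R] < 0$ on all of $[r^\star_1, 0]$. If $\alpha\beta \neq 0$, on the fixed interval $[r^\star_1, 0]$ the oscillating term runs through many periods and takes values arbitrarily close to $-4|\alpha\bar\beta|$ somewhere, forcing $\wp[R] < 0$ there. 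Either way $\wp[R]$ is strictly negative at some point of $(-\infty, 0)$.

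The hard part will be the single-branch sign analysis: since the leading WKB term gives $\wp = 0$ identically, the sign of $\wp$ must be read off from the subleading balance between $\omo'$ and $V_1$, and this requires a WKB construction with error estimates uniform in $|m|$ and in $(M,a,k)$ ranging over a neighborhood just above the Hawking-Reall threshold. The fact that the ratio $\omo' / |V_1|$ degrades as the parameters move away from the threshold is precisely what forces the smallness constraint $a - kr_+^2 \leq \varep_0 k(r_+^2+a^2)$ in the statement.
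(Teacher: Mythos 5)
Your overall strategy coincides with the paper's: a high-frequency Liouville--Green analysis near $r^\star=0$, a subleading sign analysis on the two pure WKB branches (for which the leading oscillation contributes nothing to $\wp$), and an interference argument for genuine linear combinations. Your single-branch sign condition $\omo'>|V_1|$ is equivalent to the negativity of the function $f=4|V_1|\sqrt{|V_0+V_{00}|}+2V_0'$ isolated in Lemma~\ref{lem:cruxfunction}, and your explanation of why this forces the smallness constraint on $a-kr_+^2$ (the ratio degenerates through $\omo\propto |m|\sqrt{(a-kr_+^2)(a+kr_+^2)}$) is the right mechanism. The interference step is essentially the paper's Section~\ref{sec:proofenvelope}; note however that ``the cross term dominates the diagonal terms'' is not uniform in $(\alpha,\beta)$ (take $|\beta|/|\alpha|$ exponentially small in $|m|$): the correct statement, available once the single-branch negativity is established, is that the diagonal terms are non-positive and can simply be dropped, as in~\eqref{eq:wpinterf}.

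The genuine gap is in the single-branch error analysis, which you rightly flag as the hard part but for which you do not supply a workable mechanism. The main term $-\frac{A_\pm^2}{\omo}(\omo'\pm V_1)$ has size $|r^\star|/|m|$ near $r^\star=0$ (both $\omo'$ and $V_1$ grow like $|m||r^\star|$ there, while $A_\pm^2/\omo\sim\omo^{-2}$), so an error of size $O(|m|^{-2})$ is at least comparable to the main term everywhere on the relevant interval and strictly larger than it for $|r^\star|\ll|m|^{-1}$. Worse, a Liouville--Green approximation of $R_\pm$ and $R_\pm'$ with relative error $\eta$ produces an error in $\wp=2\Re(R\bar R')$ of size $\omo^{-1}|\eta'|$; differentiating the oscillatory error costs a factor $\omo$, so even the optimal error control $F\leq C|m||r^\star|^2$ yields an error of order $|m||r^\star|^2$ in $\wp$, which swamps $|r^\star|/|m|$ as soon as $|r^\star|\geq |m|^{-2}$. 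Equivalently, your identity $(A^2\phi')'=-V_1A^2$ only yields $(A^2)'$ after subtracting $A^2\phi''/\phi'$, and the error in $\phi''$ is precisely of the order of the effect you are trying to detect. The paper circumvents this by never differentiating the WKB approximation: it derives the forced-oscillator equation $\wp_i''+(2\omo)^2\wp_i=E_i$ (see~\eqref{eq:oscillateurwp}), uses the WKB bounds only to Taylor-expand the source $E_i$ at $r^\star=0$ (where all relevant potentials vanish, so the errors are genuinely cubic in $r^\star$), and integrates against the explicit kernel $\sin(2\omo(r^\star-r^{\star,'}))/(2\omo)$; the key coefficient $\be_i=\left(4\si_iV_1+2\omo^{-1}V_0[\la_{m|m|}]'\right)'(0)\geq B\omo>0$ (your condition $\omo'>|V_1|$, via Lemma~\ref{lem:cruxfunction}) then produces a term of order $\omo|r^\star|^3$ of the good sign that dominates the $O(|r^\star|^3)$ errors. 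Note finally that all of this takes place on the shrinking interval $[-\pi/\omo,0]$ --- one full period of the beat frequency, which suffices for the interference argument --- rather than on a fixed interval $[r^\star_1,0]$, where the WKB error control function does not become small as $|m|\to+\infty$.
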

\begin{remark}
  The constant $\varep_0$ in Proposition~\ref{prop:envelope} is the same as the one in Proposition~\ref{prop:concavityargument}.
\end{remark}

In this section, we shall assume that $(M,a,k)$ are fixed admissible Kerr-adS parameters above but close to the Hawking-Reall bound:
\begin{align}\label{eq:conditionepsilonprecioussec}
  0 < a-kr_+^2 \leq \varep_0k(r_+^2+a^2),
\end{align}
where $\varep_0$ is the constant from Lemma~\ref{lem:cruxfunction}. We will consider the radial Teukolsky equation with $\la=\la_{m|m|}$. In view of~\eqref{lim:lam}, since $a>kr_+^2$, we will assume in the rest of this section that $|m|$ is sufficiently large depending on $(M,a,k)$ so that $\lat_{m|m|} <0$ and we will define
\begin{align*}
  \omo & := k(-\lat_{m|m|})^{1/2}. 
\end{align*}

\subsection{The WKB approximation}\label{sec:WKB}
Let $(R_{1},R_2)$ be the basis of solutions to the radial Teukolsky ODE~\eqref{eq:radialstatTeuk} determined by the boundary conditions
\begin{align*}
  R_1 & = R_2 = \omo^{-1/2}, & R'_1 & = i\omo R_1, & R'_2 & = -i\omo R_2,
\end{align*}
at $r^\star=0$. From~\cite[Theorem 2]{Olv61}, we have the following lemma.
\begin{lemma}\label{lem:WKB}
  There exists two smooth functions $\varep_1,\varep_2:(-\infty,0]\to\mathbb{C}$ such that
  \begin{align}\label{eq:WKBsolutions}
    \begin{aligned}
      R_1 (r^\star) & = \omo^{-1/2} \le(e^{i \omo r^\star}+\varep_1(r^\star)\ri), & R_2 (r^\star) & = \omo^{-1/2} \le(e^{-i\omo r^\star}+\varep_2(r^\star)\ri),
    \end{aligned}
  \end{align}
  for all $r^\star\in(-\infty,0]$, and the functions $\varep_i$ satisfy
  \begin{align}\label{est:WKBerrorbound}
    |\varep_i(r^\star)|,~\omo^{-1}|\varep_i'(r^\star)| & \leq \exp(F(r^\star))-1, 
  \end{align}
  for all $r^\star\in (-\infty,0]$, where
  \begin{align*}
    F(r^\star) & := \frac1\omo\int^0_{r^\star} \left|V[\la_{m|m|}](r^{\star,'}) + \omo^2\right|\d r^{\star,'}.
  \end{align*}
\end{lemma}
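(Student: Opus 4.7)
My plan is to recast equation~\eqref{eq:radialstatTeuk} as a perturbation of the free oscillator $R'' + \omo^2 R = 0$ and then invoke the Liouville--Green construction of~\cite[Theorem 2]{Olv61}. Writing the radial Teukolsky ODE in the form
$$R'' + \omo^2 R = (V[\la_{m|m|}] + \omo^2)R,$$
the unperturbed equation admits the fundamental pair $e^{\pm i\omo r^\star}$ with constant Wronskian $-2i\omo$ and associated Green's function $\omo^{-1}\sin(\omo(r^\star-s))$. Variation of parameters around the reference point $r^\star = 0$, where the prescribed Cauchy data for $R_1$ translate exactly into $\varep_1(0) = \varep_1'(0) = 0$, converts the problem into the Volterra integral equation
$$\varep_1(r^\star) = \int_0^{r^\star}\frac{\sin(\omo(r^\star-s))}{\omo}\bigl(V[\la_{m|m|}](s)+\omo^2\bigr)\bigl(e^{i\omo s}+\varep_1(s)\bigr)\,\d s,$$
and symmetrically for $\varep_2$ with $e^{i\omo s}$ replaced by $e^{-i\omo s}$.

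Next, I would run the standard Picard/Grönwall argument on this Volterra equation. The kernel bound $|\sin(\omo(r^\star-s))/\omo|\leq \omo^{-1}$ combined with $|e^{i\omo s}|=1$ gives, for $r^\star\leq 0$,
$$|\varep_1(r^\star)|\leq \int_{r^\star}^0 \omo^{-1}\bigl|V[\la_{m|m|}](s)+\omo^2\bigr|\bigl(1+|\varep_1(s)|\bigr)\,\d s,$$
so Grönwall's inequality applied to $1+|\varep_1|$ yields the stated bound $|\varep_1(r^\star)|\leq e^{F(r^\star)}-1$. Differentiating the integral equation and using $|\cos(\omo(r^\star-s))|\leq 1$ together with the freshly established bound on $|\varep_1|$ produces the same majorant for $\omo^{-1}|\varep_1'|$.

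The main (essentially bookkeeping) obstacle is to close the iteration globally on $(-\infty,0]$ rather than merely on compact subintervals. This is precisely what~\cite[Theorem 2]{Olv61} handles, provided $F(r^\star)$ is finite, which is exactly the regime in which the stated bound is non-vacuous. The existence of $R_1, R_2$ as genuine smooth solutions on $(-\infty,0]$ -- as opposed to formal fixed points of the Volterra iteration -- follows from Cauchy--Lipschitz applied to the smooth ODE coefficients. I would therefore invoke Olver's theorem as a black box and simply verify the matching of the reference point, the boundary data, and the majorant $F$.
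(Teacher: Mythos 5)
Your proposal is correct and matches the paper, which proves this lemma simply by citing \cite[Theorem 2]{Olv61}; your Volterra/Gr\"onwall reconstruction is exactly the content of that theorem (perturbation of the free oscillator $R''+\omo^2R=(V+\omo^2)R$, variation of parameters about $r^\star=0$, kernel bound $\omo^{-1}$, and the derivative estimate obtained by substituting $1+|\varep_1|\leq e^{F}$ back into the differentiated integral equation). The only point worth keeping in mind is that $F(r^\star)\to+\infty$ as $r^\star\to-\infty$ here, so the bound is only effective near $r^\star=0$ — which is all that is used later, on $[-\pi/\omo,0]$.
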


The following lemma gives a bound on the error terms in Lemma~\ref{lem:WKB}.
\begin{lemma}\label{lem:bounderrorcontrolfunction}
  There exists a constant $C=C(M,a,k)>0$ and $m_0=m_0(M,a,k)>0$ such that 
  \begin{align}\label{est:omhigh}
    C^{-1}|m| \leq \omo \leq C |m|,
  \end{align}
  for all $|m|\geq m_0$, and
  \begin{align}\label{est:bounderrorcontrolfunction}
     |F(r^\star)| \les_{M,a,k} |m||r^\star|^2,
  \end{align}
  for all $r^\star\in[-\pi/\omo,0]$ and all $|m|\geq m_0$.  
\end{lemma}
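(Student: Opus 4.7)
My plan is to prove the two bounds essentially independently. The first bound \eqref{est:omhigh} is a direct application of the angular eigenvalue asymptotics of Lemma~\ref{lem:angular}: plugging the explicit limit into the definition $\lat_{m\ell} = \la_{m\ell} - 2 - a^2k^2 - (\Xi\om_+m/k)^2$ and dividing by $m^2$, one obtains
\[
\frac{\lat_{m|m|}}{m^2} \xrightarrow{|m|\to+\infty} \Xi^2 \frac{(kr_+^2 - a)(kr_+^2 + a)}{k^2(r_+^2+a^2)^2},
\]
which is \emph{negative} whenever $a > kr_+^2$, i.e.\ under the standing assumption \eqref{eq:conditionepsilonprecioussec}. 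Since $\omo^2 = -k^2\lat_{m|m|}$, this immediately gives $\omo \sim c(M,a,k)|m|$ with $c > 0$, hence \eqref{est:omhigh} for $|m|$ large enough.

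For the second bound, the strategy is to quantify how fast $V[\la_{m|m|}] + \omo^2$ vanishes at the asymptotic boundary $r^\star=0$ (equivalently $r=+\infty$) and to integrate. I would split $V + \omo^2 = (V_0 + \omo^2) + V_{00} - iV_1$ and estimate each piece. For $V_0+\omo^2$, using the second expression for $V_0[\la]$ in Section~\ref{sec:prelim} together with $\omo^2 = -k^2\lat_{m|m|}$, a short algebraic rewriting gives
\[
V_0[\la_{m|m|}] + \omo^2 = \lat_{m|m|}\!\left(\frac{\De}{(r^2+a^2)^2} - k^2\right) + \left(\frac{\Xi\om_+m}{k}\right)^2 \frac{\De - k^2(r^2-r_+^2)^2}{(r^2+a^2)^2},
\]
and each bracketed factor is $O(1/r^2)$ at infinity (the first because its numerator $\Xi(r^2+a^2) - 2Mr$ is linear in $r$; the second because its numerator is a polynomial of degree $2$ in $r$). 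Combined with $|\lat_{m|m|}| \les_{M,a,k} m^2$ from Lemma~\ref{lem:angular}, this yields $|V_0+\omo^2| \les_{M,a,k} m^2/r^2$. For $V_{00}$, Lemma~\ref{lem:posV00} tells us that $V_{00}$ vanishes at $r=+\infty$ with a finite $r^\star$-derivative there, so $V_{00} = O(|r^\star|)$ uniformly in $m$. For $V_1$, Lemma~\ref{lem:posV1} gives $|V_1| \les_{M,a,k} |m|/r$.

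To translate $1/r$ bounds into $|r^\star|$ bounds, I would use the tortoise expansion $dr^\star/dr = (r^2+a^2)/\De \sim 1/(k^2 r^2)$ at infinity, which together with $r^\star(+\infty) = 0$ gives $1/r \les_{M,a,k} |r^\star|$ on a fixed neighborhood of $r^\star = 0$. Combining everything,
\[
|V[\la_{m|m|}] + \omo^2| \les_{M,a,k} m^2 (r^\star)^2 + |m||r^\star|,
\]
on such a neighborhood. In the window $|r^\star| \leq \pi/\omo$, we have $|r^\star| \les_{M,a,k} 1/|m|$ by \eqref{est:omhigh}, so $m^2(r^\star)^2 \les_{M,a,k} |m||r^\star|$, and hence $|V+\omo^2| \les_{M,a,k} |m||r^\star|$. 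Integrating over $[r^\star,0]$ and dividing by $\omo \sim |m|$ yields $|F(r^\star)| \les_{M,a,k} (r^\star)^2$, which implies \eqref{est:bounderrorcontrolfunction}.

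The only potential obstacle is ensuring the cancellation in $V_0+\omo^2$: the $r$-independent leading term $k^2\lat_{m|m|}$ coming from $\De/(r^2+a^2)^2 \to k^2$ must be exactly matched by the choice $\omo^2 = -k^2\lat_{m|m|}$, otherwise an $O(m^2)$ residue would survive and destroy the decay. The algebraic identity above makes this cancellation transparent, and the rest reduces to routine pointwise bounds and an integration over the semiclassical window of length $O(1/|m|)$.
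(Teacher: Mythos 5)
Your proof is correct and takes essentially the same route as the paper's: both rely on the exact cancellation $\omo^2=-V_0[\la_{m|m|}](0)$, bound the remainder $V[\la_{m|m|}]+\omo^2$ pointwise by splitting off $V_{00}$ and $V_1$, and integrate over the window of length $O(1/\omo)$ — the paper just invokes Taylor--Lagrange in $r^\star$ where you use explicit algebraic decay in $r$ together with $1/r\sim k^2|r^\star|$. (One harmless slip: the numerator $\Xi(r^2+a^2)-2Mr$ is quadratic, not linear, in $r$, but over the quartic denominator this still gives the $O(1/r^2)$ you claim.)
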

\begin{proof}
  The first bound~\eqref{est:omhigh} follows from~\eqref{lim:lam}. Since we precisely chose $\omo$ so that $\omo^2=-V_0[\la_{m|m|}](0)$, and since $V_0$ is a smooth function of $r^\star$, Taylor-Lagrange inequality gives
  \begin{align*}
    \le|V_0[\la_{m|m|}](r^\star)+\omo^2\ri| & \leq \le(\sup_{r^\star\in[-1,0]} |V_0[\la_{m|m|}]'(r^\star)|\ri) |r^\star| \les_{M,a,k} m^2|r^\star|,  
  \end{align*}
  for all $r^\star\in[-1,0]$. Similarly, using that $V_{00}(0)=V_1(0)=0$ (Lemmas~\ref{lem:posV00} and~\ref{lem:posV1}), one has
  \begin{align*}
    |V_{00}| & \les_{M,a,k} |r^\star|, & |V_1| & \les_{M,a,k} |m| |r^\star|,
  \end{align*}
  for all $r^\star\in[-1,0]$. We infer that
  \begin{align*}
    |V[\la_{m|m|}]+\omo^2| & \leq \le|V_0[\la_{m|m|}]+\omo^2\ri| + |V_{00}| + |V_1| \les_{M,a,k} m^2|r^\star| + |r^\star| + |m| |r^\star| \les_{M,a,k} m^2|r^\star|,
  \end{align*}
  for all $r^\star\in[-1,0]$ and~\eqref{est:bounderrorcontrolfunction} follows by integration and~\eqref{est:omhigh}.
\end{proof}

\subsection{Proof of Proposition~\ref{prop:envelope}: the case without interference}\label{sec:nointerferences}
If $R$ is colinear to $R_1$ or $R_2$, Proposition~\ref{prop:envelope} will directly follow from the following (stronger) lemma.
\begin{lemma}\label{lem:nointerf}
  We have $\wp[R_i](r^\star) < 0$ for all $r^\star\in[-\pi/\omo,0]$, provided that $|m|$ is sufficiently large depending on $(M,a,k)$.
\end{lemma}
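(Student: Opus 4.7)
The plan is to compute $\wp[R_i]$ explicitly on $[-\pi/\omo, 0]$ via the WKB expansion of Lemma~\ref{lem:WKB} and show that the dominant contribution is manifestly $\leq 0$, with strict inequality on $[-\pi/\omo, 0)$. I would start from the identity $\wp[R_i](r^\star) = -\int_{r^\star}^0 \wp'(s)\,\d s$, using that $\wp[R_i](0) = 0$ (immediate from $R_i(0) = \omo^{-1/2}$ and $R_i'(0) = \pm i\omo R_i(0)$), combined with $\wp' = 2|R'|^2 + 2\Re(V)|R|^2$. Substituting the WKB form $R_i = \omo^{-1/2}(e^{\pm i\omo r^\star} + \varep_i)$, the ``classical'' part of $\wp'$ (setting $\varep_i = 0$) reduces to
\begin{align*}
\wp'_{\mathrm{cl}} = 2\omo + \frac{2\Re(V)}{\omo} = \frac{2(V_0+\omo^2)}{\omo} + \frac{2V_{00}}{\omo},
\end{align*}
where the bare $\omo$ terms cancel thanks to $\omo^2 = -V_0(0)$.

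I would then Taylor-expand $V_0$ and $V_{00}$ at $r^\star = 0$. Direct inspection of the large-$r$ asymptotics $\Delta/(r^2+a^2)^2 = k^2 + \Xi/r^2 + O(1/r^3)$ and $(r^2-r_+^2)^2/(r^2+a^2)^2 = 1 - 2(r_+^2+a^2)/r^2 + O(1/r^3)$ --- crucially with no $1/r$ term --- gives $V_0'(0) = 0$ and
\begin{align*}
V_0''(0) = 2k^4\bigl[\Xi(\la_{m|m|} - 2 - a^2k^2) + 2(\Xi\om_+m)^2(r_+^2+a^2)\bigr],
\end{align*}
which by Lemma~\ref{lem:angular} satisfies $V_0''(0)/m^2 \to$ a strictly positive limit as $|m| \to \infty$. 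Combined with $V_{00}(s) = V_{00}'(0)\, s + O(s^2)$ and $V_{00}'(0) < 0$ from Lemma~\ref{lem:posV00}, integrating from $0$ to $r^\star$ yields
\begin{align*}
\wp_{\mathrm{cl}}(r^\star) = \frac{V_0''(0)}{3\omo}\,r^{\star 3} + \frac{V_{00}'(0)}{\omo}\,r^{\star 2} + O(r^{\star 4}),
\end{align*}
whose leading terms are both non-positive on $r^\star \leq 0$, strictly negative for $r^\star < 0$, the cubic term dominating for $|m|$ large.

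The hard part will be controlling the contribution of $\varep_i$ to $\wp'$. Using only the pointwise Olver bound $|\varep_i|, \omo^{-1}|\varep_i'| \lesssim |m||r^\star|^2$ from Lemmas~\ref{lem:WKB}--\ref{lem:bounderrorcontrolfunction}, the naive estimate of the cross term $4\Re(V)\omo^{-1}\Re(e^{\mp i\omo r^\star}\varep_i)$ in $\wp'$ gives a contribution of order $1/(m\varep)$ (with $\varep := (a-kr_+^2)/[k(r_+^2+a^2)]$ and using $|\Re(V)| \lesssim \omo^2 \sim m^2\varep$), comparable to or larger than the dominant $V_0''(0)r^{\star 3}/(3\omo) \sim 1/(m^2\varep^2)$ for small $\varep$. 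To overcome this I would compute the next Picard iterate of the Volterra equation for $\varep_i$: Taylor-expanding $U := V + \omo^2$ as $U'(0)\,s + \tfrac{1}{2}U''(0)\,s^2 + O(s^3)$ and evaluating the resulting oscillatory integrals against $\sin(\omo(r^\star-s))/\omo \cdot e^{\pm i\omo s}$ in closed form via repeated integration by parts, each contribution to $\wp$ reduces to a linear combination of $\sin^2(\omo r^\star) \geq 0$ and $g(\omo r^\star) := \omo r^\star - \sin(2\omo r^\star)/2 \leq 0$ (on $[-\pi/\omo,0]$), multiplied by coefficients built from $V_{00}'(0)$, $V_1'(0)$, $V_0''(0)$, whose signs align so that every leading term remains $\leq 0$; the Picard residual is then strictly subleading in $1/|m|$ for $|m|$ large depending on $(M,a,k)$. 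Finally, the case of $R_2$ reduces to $R_1$ via the symmetry $R_2(\cdot;m) = \overline{R_1(\cdot;-m)}$ (immediate from the boundary conditions at $r^\star=0$ and the complex-conjugate ODE, since $\overline{V[\la_{m|m|}]}$ corresponds to flipping the sign of $m$), which gives $\wp[R_2](r^\star;m) = \wp[R_1](r^\star;-m)$, and the dominant term $V_0''(0)r^{\star 3}/(3\omo)$ is invariant under $m \to -m$.
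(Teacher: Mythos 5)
Your overall strategy---isolate the ``classical'' part of $\wp'$, identify the leading Taylor coefficients $V_{00}'(0)\leq 0$ and $V_0''(0)>0$, and then push the Olver error bound one order further because the first-order bound $|\varep_i|\les |m||r^\star|^2$ loses a factor $|m|$ against the main term---is sound in outline, and your expansion of $V_0$ at $r^\star=0$ (no linear term, $V_0''(0)=2k^4[\Xi(\la_{m|m|}-2-a^2k^2)+2(\Xi\om_+m)^2(r_+^2+a^2)]$) checks out, as does the symmetry $R_2(\cdot;m)=\overline{R_1(\cdot;-m)}$. Your route is organized differently from the paper's: the paper never iterates the Volterra equation for $\varep_i$, but instead derives the forced-oscillator equation $\wp_i''+(2\omo)^2\wp_i=E_i$ from~\eqref{eq:wpderiv2} and applies Duhamel; there the imaginary part $V_1$ of the potential enters $E_i$ explicitly through $-2\Im(V)\Im(W_i)$, and $\Im(W_i)=2\si_i+O(|m||r^\star|^2)$ is already controlled to the required precision by the \emph{first}-order Olver bound, so no second iterate is needed. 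That reorganization is what makes the bookkeeping tractable.

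The genuine gap is the clause ``whose signs align so that every leading term remains $\leq 0$'': this is precisely the non-trivial content of the lemma, and it is not true termwise. Carrying out your second iterate, the coefficient of the dominant profile $\omo r^\star-\sin(2\omo r^\star)/2$ is, up to normalization, $\be_i=4\si_iV_1'(0)+2\omo^{-1}V_0''(0)$ with $\si_i=(-1)^i$, and $V_1'(0)=-4k^2\Xi\om_+m\,(2k^2r_+^2+1+a^2k^2)\neq0$; hence for one of the two solutions (equivalently, after your reduction, for one sign of $m$) the term $4\si_iV_1'(0)\sim|m|$ has the \emph{unfavorable} sign and must be absorbed by $2\omo^{-1}V_0''(0)\sim m^2/\omo$. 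Since $\omo\sim|m|\sqrt{\varep}$ with $\varep=(a-kr_+^2)/(k(r_+^2+a^2))$, this absorption holds only because $\varep\leq\varep_0$; for $\varep$ of order one the inequality $2\omo^{-1}V_0''(0)>4|V_1'(0)|$ can fail and the coefficient can have the wrong sign. This comparison is exactly the content of Lemma~\ref{lem:cruxfunction} (the bound $f\leq -(\Xi\om_+m)^2\DM/r$ on $\NN$, valid only for $\varep\leq\varep_0$) and is the reason Proposition~\ref{prop:envelope} carries the near-threshold hypothesis; your proposal never invokes $\varep\leq\varep_0$ at this step and offers no argument for the sign, so the proof is incomplete at its most critical point. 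Relatedly, your closing claim that the residual is ``strictly subleading in $1/|m|$'' is inaccurate for the $V_1$ contribution: it is smaller than the $V_0''(0)$ term only by a factor $\sqrt{\varep}$, not by any power of $1/|m|$.
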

\begin{proof}
  We write $\wp_i := \wp[R_i]$ and $W_i := W[R_i]$. We rewrite equation~\eqref{eq:wpderiv2} as
  \begin{align}\label{eq:oscillateurwp}
    \wp_i''+ (2\omo)^2\wp_i & = -2\Im(V[\la_{m|m|}])\Im(W_i) + 2\Re(V[\la_{m|m|}]')|R_i|^2 + 4\le(\Re(V[\la_{m|m|}])+\omo^2\ri)\wp_i =: E_i,
  \end{align}
  Duhamel's formula, using that $\wp_i=\wp'_i=0$ at $r^\star=0$ (see~\eqref{eq:WKBsolutions}), gives
  \begin{align}\label{eq:DuhamelwpR2}
    \wp_i(r^\star) & = \frac{1}{2\omo}\int^{r^\star}_0E_i(r^{\star,'})\sin\le(2\omo\le(r^\star-r^{\star,'}\ri)\ri)\d r^{\star,'}.
  \end{align}
  We rewrite the source term $E_i$ of~\eqref{eq:oscillateurwp} as $E_i=E_i^s+E_i^e$ with
  \begin{align*}
    E_i^s & := 4\sigma_iV_1 + 2\omo^{-1}\Re(V[\la_{m|m|}]'), \\
    E_i^e & :=  2V_1\le(\Im(W_i)-2\sigma_i\ri) + 2\Re(V[\la_{m|m|}]')\le(|R_i|^2-\omo^{-1}\ri) + 4\le(\Re(V[\la_{m|m|}])+\omo^2\ri)\wp_i, 
  \end{align*}
  with $\sigma_i=(-1)^i$. From the WKB estimates~\eqref{eq:WKBsolutions},~\eqref{est:WKBerrorbound}, \eqref{est:omhigh}, \eqref{est:bounderrorcontrolfunction}, we have
  \begin{align}\label{est:WKBappl1}
    \le|\Im(W_i)-2\sigma_i\ri|, |\wp_i| & \les_{M,a,k} |m||r^\star|^2, & \le||R_i|^2-\omo^{-1}\ri| & \les_{M,a,k} |r^\star|^2,
  \end{align}
  for all $r^\star\in[-\pi/\omo,0]$. By Taylor-Lagrange, using that $V_1(0) = V'_0(0) = V_{00}(0) = 0$, (Lemmas~\ref{lem:posV00}, \ref{lem:posV1} and~\ref{lem:V0'}), we have
  \begin{align}\label{est:potentialsnointerf}
    \begin{aligned}
      |V_1| & \les_{M,a,k} |m| |r^\star|, \\
      |2\Re(V[\la_{m|m|}]')| & \les_{M,a,k} m^2 |r^\star| +1 \les_{M,a,k} |m|, \\
      \le|\Re(V[\la_{m|m|}])+\omo^2\ri| & \les_{M,a,k} m^2|r^\star|^2+|r^\star| \les_{M,a,k} |m||r^\star|,
    \end{aligned}
  \end{align}
  for all $r^\star\in[-\pi/\omo,0]$. Combining~\eqref{est:WKBappl1} and~\eqref{est:potentialsnointerf}, we get $|E^e_i| \les |m||r^\star|^2$ for all $r^\star\in[-\pi/\omo,0]$. By Taylor-Lagrange, using that $V_1(0) = V_0[\la_{m|m|}]'(0) = 0$ and using~\eqref{est:omhigh}, one has
  \begin{align*}
    \le|E_i^s  - 4\si_iV_1'(0)r^\star - 2\omo^{-1}V'_{00}(0) - 2 \omo^{-1}\le(V_0[\la_{m|m|}]''(0) +V_{00}''(0)\ri)r^\star \ri| & \les_{M,a,k} m |r^\star|^2,
  \end{align*}
  which yields
  \begin{align}\label{eq:Eibound}
    \le|E_i(r^\star) - \al - (\be_i +\ga)r^\star \ri| & \les_{M,a,k} |m||r^\star|^2,
  \end{align}
  for all $r^\star\in[-\pi/\omo,0]$, with
  \begin{align*}
    \al & := 2 \omo^{-1}V'_{00}(0), & \be_i & := \le(4\sigma_iV_1 +2 \omo^{-1}V_0[\la_{m|m|}]'\ri)'(0), & \ga & :=  2 \omo^{-1}V''_{00}(0).
  \end{align*}
  Thus, plugging~\eqref{eq:Eibound} in~\eqref{eq:DuhamelwpR2}, we get
  \begin{align}\label{est:Taylorwp1}
    \le|\wp_i(r^\star)- \frac{\al}{(2\omo)^2} \le(1-\cos(2\omo r^\star)\ri) - \frac{(\be_i+\ga)}{(2\omo)^3}\le(2\omo r^\star -\sin(2\omo r^\star)\ri)\ri| & \leq C |r^\star|^3,
  \end{align}
  for all $r^\star\in[-\pi/\omo,0]$ and where $C=C(M,a,k)>0$. In the two cases $i=1,2$, we have
  \begin{align*}
    \be_i & = -k^2 \lim_{r\to+\infty} r \le(4\sigma_iV_1 +2 \omo^{-1}V_0[\la_{m|m|}]'\ri) \\
          & \geq -k^2 \lim_{r\to+\infty} r \le(4|V_1| +2 \omo^{-1}V_0[\la_{m|m|}]'\ri) \\
          & = - k^2\omo^{-1}\lim_{r\to+\infty}rf(r).
  \end{align*}
  where $f$ is the function defined in Lemma~\ref{lem:cruxfunction}. Since $V_0[\la_{m|m|}](0)+V_{00}(0) = -\omo^2 < 0$, there exists $r_0>r_+$ such that $[r_0,+\infty)\subset \NN$ where $\NN$ is the set defined in Lemma~\ref{lem:cruxfunction}. Hence, from~\eqref{est:cruxfunction} -- using that $\varep \leq \varep_0$ where $\varep_0$ is the constant in Lemma~\ref{lem:cruxfunction} and that one can take $|m|\geq m_0$ with $m_0$ the constant in Lemma~\ref{lem:cruxfunction} --, we infer that 
  \begin{align}\label{eq:beiesti}
    \be_i \geq k^2\omo^{-1}\DM(\Xi\om_+m)^2 \geq B\omo,
  \end{align}
  where $B=B(M,a,k)>0$. We have
  \begin{align}
    \label{eq:sinxx3}
     \forall x \in[-2\pi,0], \quad 4\pi^2\le(x-\sin x\ri) \leq x^3 \leq 0. 
  \end{align}
  Using~\eqref{eq:beiesti} (and in particular that $\be_i>0$) and~\eqref{eq:sinxx3} we deduce from~\eqref{est:Taylorwp1} that for all $r^\star\in[-\pi/\omo,0]$,
  \begin{align*}
    \wp_i(r^\star) & \leq \frac{2V_{00}'(0)}{4\omo^3} \le(1-\cos(\omo r^\star)\ri) + \frac{2V_{00}''(0)}{8\omo^4} \le(2\omo r^\star -\sin(2\omo r^\star)\ri) \\
                 & \quad + \frac{B}{8\omo^2} \le(2\omo r^\star -\sin(2\omo r^\star)\ri) + \frac{C}{\omo^3}|\omo r^\star|^3 \\
                 & \leq  \frac{2V_{00}'(0)}{4\omo^3} \le(1-\cos(\omo r^\star)\ri) + \frac{B}{4\pi^2\omo^2} (\omo r^\star)^3 + \frac{A}{\omo^3}|\omo r^\star|^3,
  \end{align*}
  where $A=A(M,a,k)>0$. Hence, using that $V'_{00}(0)\leq 0$ (Lemma~\ref{lem:posV00}), that $\omo r^\star \leq 0$, and provided that $\omo$ is sufficiently large depending on $(M,a,k)$, we have $\wp_i(r^\star)\leq0$ for all $r^\star\in[-\pi/\omo,0]$ for $i=1,2$, as desired.
\end{proof}

\subsection{Proof of Proposition~\ref{prop:envelope}: the case with interference}\label{sec:proofenvelope}
Let $R$ be a solution to the radial Teukolsky equation~\eqref{eq:radialstatTeuk}. Let $A_1,A_2 \in\mathbb{C}$ such that $R = A_1R_1+A_2R_2$. Assume that $A_1A_2\neq0$ (the other case is treated in Section~\ref{sec:nointerferences}). Using Lemma~\ref{lem:nointerf}, we have
\begin{align}\label{eq:wpinterf}
  \begin{aligned}
    \wp[R] & = |A_1|^2\wp[R_1] + |A_2|^2\wp[R_2] +2\Re\le(A_1\bar A_2 R_1\bar R_2' + A_2\bar A_1R_2\bar R_1'\ri) \\
           & \leq 2\Re\le(A_1\bar A_2 R_1\bar R_2' + A_2\bar A_1R_2\bar R_1'\ri) =: q[R],
  \end{aligned}
\end{align}
on $[-\pi/\omo,0]$. From the WKB estimates~\eqref{eq:WKBsolutions},~\eqref{est:WKBerrorbound}, \eqref{est:omhigh}, \eqref{est:bounderrorcontrolfunction}, we have
\begin{align*}
  \le|\omo R_1\bar R_2 - e^{2i\omo r^\star}\ri| + \omo^{-1/2}\le|R_1' -i \omo R_1\ri| + \omo^{-1/2}\le|R_2' +i \omo R_2\ri| & \les_{M,a,k} |m||r^\star|^2,
\end{align*}
hence
\begin{align}\label{eq:approxqq}
  \le|q(r^\star) + 4\Im\le(A_1\bar A_2e^{2i\omo r^\star}\ri)\ri|\les_{M,a,k} |A_1||A_2| |m||r^\star|^2,
\end{align}
for all $r^\star\in[-\pi/\omo,0]$. Up to renormalising $R$ one can assume that $|A_1||A_2|=1$, \emph{i.e.} $A_1\bar A_2 = e^{i\theta}$ with $\theta\in[0,2\pi[$. With this normalisation, one has
\begin{align}\label{eq:qafterrenorm}
  \le|q(r^\star)+ 4\sin\le(2\omo r^\star+\theta\ri)\ri| & \leq C |m||r^\star|^2,
\end{align}
for all $r^\star\in[-\pi/\omo,0]$ and where $C=C(M,a,k)>0$. For all $\theta\in[0,2\pi[$, there exists $\kappa_\theta\in\{-3,1\}$ such that $-\frac{\theta}{2\omo} + \kappa_\theta\frac\pi{4\omo}\in [-\pi/\omo,0]$. Taking $r^\star = -\frac{\theta}{2\omo} + \kappa_\theta\frac\pi{4\omo}$ in~\eqref{eq:qafterrenorm}, we get
\begin{align*}
  q\left(-\frac{\theta}{2\omo} + \kappa_\theta\frac\pi{4\omo}\right) & \leq -4 + \frac{C |m|}{\omo^2}\le(-\frac\theta2 + \kappa_\theta\frac{\pi}4\ri)^2 <0,
\end{align*}
provided that $|m|$ is sufficiently large depending on $(M,a,k)$. Using~\eqref{eq:wpinterf} this finishes the proof of Proposition~\ref{prop:envelope}.

\section{Proofs of the main theorems}\label{sec:conclusion}
\subsection{The mode stability result: proof of Theorem~\ref{thm:modestab}}\label{sec:proofthmmodestab}
The proof follows the arguments of~\cite[Section 5]{Gra.Hol23}. Let $(M,a,k)$ be fixed admissible Kerr-adS parameters such that $a<kr_+^2$. Assume that there exists non-trivial stationary modes~\eqref{eq:defstatmodes} solutions to the Teukolsky equations~\eqref{eq:Teuk}. Up to decomposing $S^{[\pm2]}(\varth)e^{\pm im\varphi}$ onto the Hilbert basis of Lemma~\ref{lem:hilbert} (see also Remark~\ref{rem:spin-2}), one can assume that $R:=R^{[+2]}$ is solution to the radial Teukolsky equation~\eqref{eq:radialstatTeuk} with $\la=\la_{m\ell}$, $\ell\geq |m|$. The regularity condition at the horizon~\eqref{eq:defreghor} and the boundary condition at infinity~\eqref{eq:TeukBC} imply that $R$ is regular at the horizon~\eqref{eq:defreghorradial} and satisfies the condition $\wp[R](0)=0$ (Lemma~\ref{lem:sep} and Lemma~\ref{lem:reciproque}).\\

Integrating~\eqref{eq:wpderiv1} and using that $\wp[R](r^\star)\to0$ when $r^\star\to-\infty$ by~\eqref{eq:defreghorradial}, one has the energy identity
\begin{align}\label{eq:energy}
  \wp[R](r^\star) & = 2 \int_{-\infty}^{r^\star} \left(\le|R'\ri|^2 +\left(V_0[\la_{m\ell}]+V_{00}\right)|R|^2\right)\,\d r^{\star,'},
\end{align}
for all $r^\star\in(-\infty,0]$. We have $V_{00}(r)>0$ (see Lemma~\ref{lem:posV00}) and
\begin{align*}
  V_0[\la_{m\ell}](r) & = \frac{\De}{(r^2+a^2)^2}\lat_{m\ell} + \le(\frac{\Xi\om_+m}{k}\ri)^2\frac{\De-k^2(r-r_+)^2(r+r_+)^2}{(r^2+a^2)^2} \geq \frac{\De}{(r^2+a^2)^2}\lat_{m\ell},
\end{align*}
for all $r\in(r_+,+\infty)$ (see Lemma~\ref{lem:boundonV0}). By Lemma~\ref{lem:hilbert} and Lemma~\ref{lem:angular}, one has
\begin{align*}
  \lat_{m\ell} & \geq \la_{m|m|} -\Xi^2 \le(\frac{\om_+}{k}\ri)^2m^2 -2-a^2k^2 \sim\Xi^2\frac{(kr_+^2-a)(kr_+^2 + a)}{k^2(r_+^2+a^2)^2}m^2 \xrightarrow{|m|\to+\infty} +\infty,
\end{align*}
since the (strict) Hawking-Reall bound $a<kr_+^2$ holds.\footnote{Note that this limit is not uniform in all parameters satisfying the Hawking-Reall bound.} Hence, there exists $m_2=m_2(M,a,k)>0$ sufficiently large such that for all $|m|\geq m_2$, the function $V_0[\la_{m\ell}]+V_{00}$ is strictly positive and, by~\eqref{eq:energy}, $\wp[R](0)=0$ implies $R=R'=0$ and this finishes the proof of Theorem~\ref{thm:modestab}.

\subsection{The shooting argument: proof of Theorem~\ref{thm:main}}\label{sec:proofthmmain}
We consider $\Theta:s\in[0,1]\mapsto\left(M(s),a(s),k(s)\right)$ a continuous path of admissible Kerr-adS parameters, such that
\begin{align*}
  a(0) & < k(0)(r_+(0))^2 \quad \text{and} \quad a(1) > k(1)(r_+(1))^2.
\end{align*}
Upon restricting the path $\Theta$, one can assume that all the black holes are strictly rotating, \emph{i.e.} $a(s) >0$ for all $s\in[0,1]$, so that Propositions~\ref{prop:concavityargument} and~\ref{prop:envelope} apply. Let $K=\Theta([0,1])$. By continuity, there exists $s_c\in]0,1]$ such that
\begin{align}\label{eq:condvarepproofmain}
  a(s_c) > k(s_c)(r_+(s_c))^2 && \text{and} && \forall s\in[0,s_c], \quad \varep(s) := \frac{a(s)-k(s)r_+(s)^2}{k(s)(r_+(s)^2+a(s)^2)} \leq \varep_0,
\end{align}
where $\varep_0=\varep_0(K)>0$ is the constant of Propositions~\ref{prop:concavityargument} and~\ref{prop:envelope}.\\

Let $m_0=m_0(K)>0$ be the constant of Proposition~\ref{prop:concavityargument}, $m_1=m_1\left(M(s_c),a(s_c),k(s_c)\right)>0$ be the constant of Proposition~\ref{prop:envelope} and $m_2=m_2\left(M(0),a(0),k(0)\right)>0$ be the constant of the proof of Theorem~\ref{thm:modestab}, and fix $|m| \geq \max(m_0,m_1,m_2)$.\\ 

Let $R_s$ be the regular solution at the horizon~\eqref{eq:defreghorradial} of the radial Teukolsky equation~\eqref{eq:radialstatTeuk} for the parameters $\left(M(s),a(s),k(s)\right)$, with constant $\la=\la_{m|m|}(s)$, normalised by
\begin{align}\label{eq:normalisationL2}
  \int_{-\infty}^{0} |R_s|^2 \,\d r^\star = 1. 
\end{align}
We state the following lemma, which will be used in the sequel. The proof is standard and is left to the reader.
\begin{lemma}\label{lem:continuityproofmain}
  The map $s\in[0,1]\mapsto \wp[R_s] \in C^0_0\left((-\infty,0]_{r^\star},\mathbb{R}\right)$ is continuous.\footnote{$C^0_0\left((-\infty,0]_{r^\star},\mathbb{R}\right)$ denotes the space of continuous functions vanishing at $-\infty$ endowed with the uniform norm.} 
\end{lemma}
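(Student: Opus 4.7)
The plan is to reduce the statement to continuous dependence of ODE solutions on parameters, augmented by uniform estimates near the regular singular point $r = r_+$ (equivalently $r^\star \to -\infty$).

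\emph{Frobenius normalization.} I first select a unique solution before the $L^2$ rescaling~\eqref{eq:normalisationL2} by fixing a Frobenius normalization at the horizon. The indicial analysis recalled in the proof of Lemma~\ref{lem:reciproque} shows that the regular branch is of the form $R = \De F$ with $F$ smooth up to $r = r_+$; I normalize by $F(r_+) = 1$ and denote by $\tilde R_s$ the resulting solution. The coefficient $V[\la_{m|m|}(s)]$ of the ODE~\eqref{eq:radialstatTeuk} depends continuously on $s$, since $s\mapsto(M(s),a(s),k(s))$ is continuous by assumption and $s\mapsto\la_{m|m|}(s)$ is continuous by standard perturbation theory (alternatively by a compactness argument based on the min-max characterization~\eqref{eq:minimisation}).

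\emph{Uniform exponential decay at the horizon.} The surface gravity $\kappa_+(s) := \partial_r\De(r_+(s))/(2(r_+^2(s)+a^2(s)))$ is uniformly positive on $K := \Theta([0,1])$, and $r - r_+ \asymp e^{2\kappa_+(s) r^\star}$ near the horizon. Combined with $R = \De F$ and standard Frobenius control on $F$, this yields constants $C,\kappa>0$ depending only on $K$ such that
\begin{align*}
  \le|\tilde R_s(r^\star)\ri| + \le|\tilde R_s'(r^\star)\ri| & \leq C e^{\kappa r^\star},
\end{align*}
for all $r^\star$ sufficiently negative and all $s\in[0,1]$. Since Cauchy--Lipschitz does not apply at the singular point itself, I realize $\tilde R_s$ by shooting from $r = r_+ + \de$ with initial data prescribed by a truncated Frobenius series, and then pass to the limit $\de \to 0$ with rates uniform in $s$.

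\emph{Local continuity and conclusion.} On any compact interval $[-T,0]_{r^\star}$, continuous dependence of ODE solutions on parameters ensures that $s\mapsto(\tilde R_s, \tilde R_s')$ is continuous from $[0,1]$ into $C^0([-T,0],\CCC)^2$. Together with the uniform exponential decay, dominated convergence gives continuity of $s\mapsto \|\tilde R_s\|_{L^2((-\infty,0])}$; it is strictly positive since $\tilde R_s \not\equiv 0$ (as $F(r_+) = 1$), hence bounded below on the compact interval $[0,1]$. Thus the normalization factor depends continuously on $s$, so $R_s := \tilde R_s / \|\tilde R_s\|_{L^2}$ and $R_s'$ inherit local uniform continuity and uniform exponential decay. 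Both transfer to $\wp[R_s] = 2\Re(R_s \bar R_s')$: continuity on any $[-T,0]$ combines with uniform decay as $r^\star \to -\infty$ to give continuity in $C^0_0((-\infty,0])$.

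\emph{Main obstacle.} The key difficulty is the uniform treatment of the regular singular point $r = r_+$: the initial data cannot be imposed directly at the singular point in the standard Cauchy--Lipschitz framework, and one must verify that the regularization procedure is \emph{uniform} in $s \in [0,1]$. This is tractable because both indicial roots $\pm 1$ and the surface gravity $\kappa_+(s)$ are bounded and non-degenerate on the compact set $K$ of sub-extremal admissible parameters.
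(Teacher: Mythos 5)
The paper declares this proof ``standard'' and leaves it to the reader, and your argument is a correct implementation of exactly that standard route: a uniform Frobenius normalization and exponential bound at the regular singular point $r=r_+$ (uniform in $s$ thanks to compactness and non-degeneracy of the surface gravity), continuous dependence on parameters on compact $r^\star$-intervals, continuity and positivity of the $L^2$ normalization factor, and transfer of all of this to $\wp[R_s]=2\Re(R_s\bar R_s')$ in the $C^0_0$ topology. The only asserted step carrying real content is the continuity of $s\mapsto\la_{m|m|}(s)$: upper semicontinuity is immediate from the infimum characterization~\eqref{eq:minimisation} applied to a fixed test function, while lower semicontinuity requires the (easy, but worth a line) observation that the singular structure of $G_m$ at the poles $\varth=0,\pi$ is the same for all parameters on the compact path, so the quadratic forms at nearby values of $s$ are comparable up to an $o(1)$ multiplicative and additive error.
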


Using Lemma~\ref{lem:boundonV0}, there exists $r_c>r_+$ sufficiently small with respect to $K$ and $|m|$ such that
\begin{align*}
  V_0[\la_{m|m|}](r) + V_{00}(r) & \geq  \frac{\De}{(r^2+a^2)^2}\le(\min_{s\in[0,1]}\lat_{m|m|}\ri) + V_{00}(r) \\
                     & > \frac12 V_{00}(r_+) = \frac12\frac{(\pr_r\De(r_+))^2}{(r_+^2+a^2)^2} >0,
\end{align*}
for all $r\in (r_+,r_c)$ and all $s\in[0,1]$. Hence, by the energy identity~\eqref{eq:energy}, we have
\begin{align}
  \label{eq:QRs>0rc}
  \forall s\in[0,1],~\forall r\in(r_+,r_c], \quad \wp[R_s](r)>0.
\end{align}

Let us now define
\begin{align*}
  s_m := \sup J, && \text{with} && J := \le\{s\in[0,s_c]:\forall s'\in[0,s],~\forall r^\star\in(r^\star_c,0),~\wp[R_{s'}](r^\star)\geq0\ri\}.
\end{align*}
We have the following observations.
\begin{enumerate}
\item Since $|m|\geq m_2$ where $m_2$ is the constant of the proof of Theorem~\ref{thm:modestab} associated to $\left(M(0),a(0),k(0)\right)$, the function $V_0[\la_{m|m|}]+V_{00}$ is strictly positive on $(-\infty,0)$ for $s=0$ and by the energy identity~\eqref{eq:energy}, $\wp[R_0]>0$ on $[r^\star_c,0]$. Hence $0\in J$ and $s_m$ is well-defined.
\item By continuity (Lemma~\ref{lem:continuityproofmain}), we have $\wp[R_{s_m}](r^\star) \geq 0$, for all $r^\star\in[r^\star_c,0]$.
\item By~\eqref{eq:condvarepproofmain} and since $|m|\geq m_1$ where $m_1$ the constant of Proposition~\ref{prop:envelope} associated to $\left(M(s_c),a(s_c),k(s_c)\right)$, Proposition~\ref{prop:envelope} implies that $s_c\notin J$. By continuity (Lemma~\ref{lem:continuityproofmain}), this implies that there exists $r^\star_0\in[r^\star_c,0]$ such that $\wp[R_{s_m}](r^\star_0) = 0$.
\end{enumerate}
We can now conclude. Assume by contradiction that $r^\star_0\neq0$. By~\eqref{eq:QRs>0rc} one has $r^\star_c<r^\star_0<0$. Hence, since $\wp[R_{s_m}]$ is non-negative, one must have $\wp[R_{s_m}]'(r^\star_0)=0$. By the result of Proposition~\ref{prop:concavityargument} (which applies, since $|m|\geq m_0$ and~\eqref{eq:condvarepproofmain} holds), either $R_{s_m}=0$ which is impossible by~\eqref{eq:normalisationL2}, or $\wp[R_{s_m}]''(r^\star_0)<0$ which is also impossible because $\wp[R_{s_m}]$ is non-negative. Hence $r^\star_0=0$, \emph{i.e.} $R_{s_m}$ satisfies the target boundary condition~\eqref{eq:BCstat}. Thus, by Lemma~\ref{lem:sep} and Lemma~\ref{lem:target}, $R_{s_m}$ defines a non-trivial stationary mode solution, as claimed in Theorem~\ref{thm:main}.\\

From the mode stability result of Theorem~\ref{thm:modestab} one has
\begin{align*}
  \liminf_{|m|\to+\infty} \left(a(s_m)-k(s_m)r_+(s_m)^2\right) \geq 0.
\end{align*}
On the other hand, there exists $s_{\mathrm{HR}}\in[0,s_c)$ such that
\begin{align*}
  a(s_{\mathrm{HR}}) & = k(s_{\mathrm{HR}}) \le(r_+(s_{\mathrm{HR}})\ri)^2, & \forall s \in(s_{\mathrm{HR}},s_c], & \quad 0 < \varep(s) \leq \varep_0.
\end{align*}
Hence, from the high-frequency result of Proposition~\ref{prop:envelope}, for all $s\in(s_{\mathrm{HR}},s_c)$, there exists $m_3=m_3(s)>0$ such that for all $|m|\geq m_3$, $s\notin J=[0,s_m]$. Thus, 
\begin{align*}
  \limsup_{|m|\to+\infty} \left(a(s_m)-k(s_m)r_+(s_m)^2\right) \leq 0.
\end{align*}
This shows~\eqref{eq:limHRminfinity} and finishes the proof of Theorem~\ref{thm:main}.

\bibliographystyle{graf_GR_alpha}
\bibliography{graf_GR}
\end{document}